\newtheorem{theorem}{Theorem}[section]
\newtheorem*{theorem*}{Theorem}
\newtheorem{lem}{Lemma}[section]
\newtheorem{corollary}{Corollary}[section]
\newtheorem{definition}[theorem]{Definition}
\newtheorem*{assumption}{Assumption}
\newtheorem*{remark}{Remark}
\newcommand{\setR}{\mathbb{R}}
\newcommand{\setN}{\mathbb{N}}
\newcommand{\XXint}[3]{{\setbox0=\hbox{$#1{#2#3}{\int}$}
      \vcenter{\hbox{$#2#3$}}\kern-.5\wd0}}
\newcommand{\Xint}[1]{\mathchoice
    {\XXint\displaystyle\textstyle{#1}}%
    {\XXint\textstyle\scriptstyle{#1}}%
    {\XXint\scriptstyle\scriptscriptstyle{#1}}%
    {\XXint\scriptscriptstyle\scriptscriptstyle{#1}}%
    \!\int}
\newcommand{\dashint}{\mathop{\Xint-}}
\newcommand{\settmp}[2]{#1\{{#2}#1\}}
\newcommand{\set}[1]{\settmp{}{#1}}
\newcommand{\bigset}[1]{\settmp{\big}{#1}}
\newcommand{\biggset}[1]{\settmp{\bigg}{#1}}
\newcommand{\normtmp}[2]{#1\lVert{#2}#1\rVert}
\newcommand{\norm}[1]{\normtmp{}{#1}}
\newcommand{\abstmp}[2]{#1\lvert{#2}#1\rvert}
\newcommand{\abs}[1]{\abstmp{}{#1}}
\newcommand{\biggabs}[1]{\abstmp{\bigg}{#1}}
\newcommand{\hbs}{H^1(B_R,S^2)} 
\newcommand{\hos}{H^1(\Omega ,S^2)} 
\newcommand{\Div}{\operatorname{div}} 
\newcommand{\curl}{\operatorname{curl}}
\newcommand{\support}{\operatorname{supp}}
\newcommand{\ui}{\overline{u}}
\newcommand{\B}{\mathbf{B}}
\newcommand{\A}{\mathbf{A}}
\newcommand{\Poincare}{Poincar\'e}
\newcommand{\diag}{\operatorname{diag}}
\title{\sc Boundary regularity for minimizers of the micromagnetic energy functional\
\thanks{{\bf 2010 Mathematics Subject Classification}: 49N60; 82D40.\newline
{\bf Key words}: micromagnetic energy functional, boundary regularity, minimizing harmonic maps into spheres, Morrey-Campanato theory, harmonic approximation.} }
\author{\sc  Alexander Huber}
\date{}
\begin{document}

\maketitle

\begin{abstract}
Motivated by the construction of time-periodic solutions for the three-dimensional Landau-Lifshitz-Gilbert equation in the case of soft and small ferromagnetic particles, we investigate the regularity properties of minimizers of the micromagnetic energy functional at the boundary. In particular, we show that minimizers are regular 
provided the volume of the particle is sufficiently small. The approach uses a reflection construction at the boundary and an adaption of the well-known regularity theory for minimizing harmonic maps into spheres.
\end{abstract}

\section{Introduction and statement of the main result}
This work is motivated by the construction of time-periodic solutions for the three-dimensional Landau-Lifshitz-Gilbert equation in the case of soft and small ferromagnetic particles. \textquotedblleft{}Small\textquotedblright{} means that the diameter (or three-dimensional volume) of the ferromagnetic sample is sufficiently small and \textquotedblleft{}soft\textquotedblright{} refers to the case where no material anisotropy is considered. In our work \cite{alex_small} (see also \cite{alex_diss}), one of the crucial ingredients for the presented analysis is the fact that minimizers of the micromagnetic energy functional are regular up to the boundary in the small particle case. It is the aim of the paper at hand to present a proof of this fact which is of interest for its own sake.

The minimization problem under consideration reads as follows: Minimize
\begin{align*}
 E^\eta(u) = \int_{\Omega} \abs{\nabla u}^2 \, dx + \eta^2 \int_{\setR^3} \abs{H[u]}^2 \, dx
\end{align*}
among all functions $u \in H^1(\Omega,S^2) = \bigset{u \in H^1(\Omega,\setR^3) \, \big| \,  \abs{u}=1 \text{ almost everywhere}}$, where the so-called stray field $H[u] \in L^2(\setR^3,\setR^3)$ is the unique solution of
\begin{align*}
 \begin{array}{rl}
    \Div \big( H[u] + \chi_{\Omega} \, u \big) &= 0
    \\[-0,2cm]
    \curl H[u] &= 0
 \end{array} \quad \text{in } \setR^3 \,.
\end{align*}
Here, $\Omega \subset \setR^3$ is a bounded domain, and $\eta>0$ is a parameter representing the size of the particle. We remark that the boundary values of minimizers are completely determined by the minimization process since we have not imposed additional assumptions for the competing mappings at the boundary. Moreover, we want to mention that the above problem is a rescaled version of the micromagnetic minimization problem and $\eta$ plays the role of a scaling parameter (see \cite{alex_small}, \cite{alex_diss}).

Our aim is to prove the following theorem:
\begin{theorem*}[\textbf{Main result}]
  Let $\Omega \subset \setR^3$ be a bounded $C^{2,1}$-domain. There exist positive constants $\eta_0 = \eta_0(\Omega)$ and $C_0=C_0(\Omega)$ with the following property: If $u_\eta$ is a minimizer of $E^\eta$ on $H^1(\Omega,S^2)$ with parameter $0<\eta\le\eta_0$, then 
  \begin{align*}
   u_\eta \in H^2_N(\Omega,\setR^3) \cap C^{1,\gamma}(\overline{\Omega},\setR^3) \qquad \text{and} \qquad \norm{\nabla u_\eta}_{L^\infty} \le C_0 \eta
  \end{align*}
  for every $\gamma \in (0,1)$, where \textquotedblleft$N$\textquotedblright stands for homogeneous Neumann boundary conditions, i.e.,  $\frac{\partial u_\eta}{\partial \nu} = 0$ on the boundary $\partial \Omega$ of $\Omega$ with outer normal $\nu$.
\end{theorem*}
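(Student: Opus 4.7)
The proof follows the Schoen--Uhlenbeck regularity theory for minimizing harmonic maps into $S^2$, extended to handle the non-local lower-order perturbation $\eta^2\int_{\setR^3}|H[u]|^2\,dx$ and the absence of prescribed boundary data. Three ingredients are required: a global energy bound, a boundary $\varepsilon$-regularity theorem obtained via a reflection construction and harmonic approximation, and a Morrey--Campanato iteration. The small parameter $\eta$ plays the role of the smallness needed to trigger $\varepsilon$-regularity uniformly.

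The starting point is an a priori energy bound. Using a constant map $c\in S^2$ as competitor yields
\begin{equation*}
 E^\eta(u_\eta) \le \eta^2\,\norm{H[c]}_{L^2(\setR^3)}^2 \le C\,\abs{\Omega}\,\eta^2,
\end{equation*}
so in particular $\int_\Omega|\nabla u_\eta|^2\,dx\le C\eta^2$. This global Dirichlet bound is the crucial smallness input. Since $u_\eta$ satisfies only the natural Neumann condition $\partial u_\eta/\partial\nu=0$, I plan to straighten $\partial\Omega$ locally by a $C^{2,1}$-diffeomorphism and then extend $u_\eta$ across the flattened boundary by even reflection. The Neumann condition ensures that the resulting map $\tilde u$ lies in $H^1$ of the full ball, while the Euler--Lagrange equation --- which has the form of the harmonic-map equation $-\Delta u_\eta = |\nabla u_\eta|^2\,u_\eta$ perturbed by an $L^\infty$-bounded term of size $O(\eta^2)$ coming from the stray field --- transforms into a harmonic-map-type system with Lipschitz, reflection-symmetric coefficients and a lower-order source of the same size. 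Moreover, $\tilde u$ minimizes a reflected energy among symmetric $S^2$-valued competitors on small balls, so that a harmonic-approximation lemma is applicable.

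For this reflected problem I plan to establish an $\varepsilon$-regularity statement of the form: there exist $\varepsilon_0,\alpha,\eta_0>0$ such that if
\begin{equation*}
 r^{-1}\int_{B_r(x_0)}|\nabla\tilde u|^2\,dx + \eta^2 r\le\varepsilon_0,
\end{equation*}
then $\tilde u\in C^{1,\alpha}(B_{r/2}(x_0))$. The proof is the classical harmonic-approximation scheme: on a small ball, $\tilde u$ is $L^2$-close to a minimizing harmonic map into $S^2$, which at this energy scale is in turn close to an affine/constant $S^2$-valued map. This yields geometric decay of the normalized Dirichlet energy, a Morrey--Campanato estimate gives H\"older continuity, and elliptic bootstrapping on the now smooth harmonic-map system promotes the result to $C^{1,\gamma}$ and $H^2$. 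Because the global bound from Step~1 ensures the smallness hypothesis at every scale once $\eta\le\eta_0$, the $\varepsilon$-regularity theorem applies uniformly on $\overline{\Omega}$; the pointwise estimate $\norm{\nabla u_\eta}_{L^\infty}\le C_0\,\eta$ then follows by choosing $r$ of order $1$ in the $\varepsilon$-regularity estimate combined with the global bound $\int_\Omega|\nabla u_\eta|^2\le C\eta^2$.

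The main obstacle is the boundary $\varepsilon$-regularity step, and for two distinct reasons. First, the reflection across a curved $C^{2,1}$-boundary must be arranged so that the straightened Riemannian metric is symmetric under the reflection --- this is where the $C^{2,1}$-regularity of $\Omega$ enters --- and so that the reflected minimality property holds against all symmetric $S^2$-valued competitors; only then does the harmonic-approximation lemma remain available for $\tilde u$ after reflection. Second, the non-locality of $H[u_\eta]$ obstructs a direct localization of the minimality property: one cannot minimize $E^\eta$ over a small ball while keeping the stray field global. To handle this I plan to exploit the linearity of the magnetostatic operator and the global $L^2$-bound $\norm{H[u_\eta]}_{L^2(\setR^3)}\le C\eta$ to split the stray-field contribution into an explicit frozen part (which is small in $L^2$ on each ball) and a perturbation that can be absorbed into the right-hand side of a frozen-coefficient harmonic-map problem on each ball.
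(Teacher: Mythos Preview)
Your proposal follows essentially the same strategy as the paper --- global energy bound via a constant competitor, reflection across the flattened boundary, harmonic approximation, and Morrey--Campanato iteration --- but two technical points deserve correction or sharpening.

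First, the Neumann condition $\partial u_\eta/\partial\nu=0$ is part of the \emph{conclusion}, not an a priori hypothesis; you cannot invoke it to justify the reflection. Fortunately it is not needed: even reflection of any $H^1$ function across a hyperplane is always in $H^1$, and both the reflected Euler--Lagrange equation and the reflected (almost-)minimality follow purely from the symmetry of the construction --- the paper obtains them by testing against a generic $\overline\varphi$ and then against its reflection $\overline{\overline\varphi}$, summing the resulting identities over $B_R^+$ and $B_R^-$. The Neumann condition is recovered only at the very end, by integrating the Euler--Lagrange identity by parts once $u_\eta\in H^2$ is known.

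Second, your plan to handle the non-local stray field by ``freezing'' it on each ball is more elaborate than what is actually required. The paper instead shows that minimizers of $E^\eta$ are \emph{almost minimizers} of the Dirichlet energy: for any competitor $v$ with $v=u_\eta$ outside an open set $U$, one has
\[
\int_{\Omega\cap U}|\nabla u_\eta|^2 \le \int_{\Omega\cap U}|\nabla v|^2 + C\,\eta^2\,d(U)^{1+2\alpha},
\]
the error term coming from a single H\"older estimate on $\int_{\Omega\cap U} H[v+u_\eta]\cdot(v-u_\eta)$ using only $|v|,|u_\eta|\le 1$ and the $L^p$-boundedness of the magnetostatic operator. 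This almost-minimizer inequality survives the reflection intact (against \emph{all} competitors in $H^1(B_R,S^2)$, not only symmetric ones), and it is precisely the hypothesis under which the monotonicity formula, the Luckhaus comparison, and hence the reverse Poincar\'e inequality go through. Once those are in place, the stray field never reappears as a separate difficulty in the harmonic-approximation step; it is entirely absorbed into the $C\eta^2\rho^{2\alpha}$ error carried along the iteration.
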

Regularity questions for the above problem have already been studied in the papers by Hardt, Kinderlehrer \cite{hardtkinderlehrer}  
and Carbou \cite{carbou}. The idea in both papers is to consider 
the non-local term $H[u]$ in $E^\eta$ as a lower order perturbation of the 
Dirichlet energy. In view of the pointwise constraint $\abs{u}=1$, one
therefore expects similar results as in the case of minimizing harmonic maps into spheres.

Carbou studied in \cite{carbou} stationary solutions of the Euler-Lagrange equation for $E^\eta$ in the spirit of stationary harmonic maps into spheres. It is shown that these solutions are smooth in the interior of $\Omega$ except for a set of vanishing one-dimensional Hausdorff measure. The proof follows ideas 
taken from the work by Evans \cite{Evans} and uses a monotonicity formula combined with the $\mathcal{H}^1-BMO$ duality. See also Bethuel \cite{Bethuel} or the book by Moser \cite{moser} for stationary harmonic maps into general target manifolds. In \cite{carbou}, regularity results for minimizers of $E^\eta$ are also stated, and it is shown that the singularities of minimizers are isolated in $\Omega$. However, no results concerning boundary regularity have been derived in \nolinebreak \cite{carbou}.

Hardt and Kinderlehrer, using the notion of almost minimizers,
have shown in \cite{hardtkinderlehrer} that minimizers of $E^\eta$ are smooth in the interior of $\Omega$ with the exception of a finite number of 
singularities. Their proof involves modifying partial regularity theory for minimizing harmonic maps as given in the work by Hardt, Kinderlehrer, and Lin \cite{HardtKinderlehrerLin}. Moreover, it is shown in \cite{hardtkinderlehrer} that minimizers are H{\"o}lder continuous near the boundary, provided 
$\Omega$ is a Lipschitz domain that satisfies a certain additional 
assumption which, roughly speaking, ``excludes cusps but not corners'' (see \cite{hardtkinderlehrer} for details). 
It is also shown that the set of singularities 
is completely empty in the interior of $\Omega$ for $\eta>0$ small enough. 
But we remark that a higher regularity result for minimizers up to the 
boundary, namely differentiability, is not stated in 
\cite{hardtkinderlehrer}. Moreover, we want to point out that we have not 
found a reference which proves higher regularity at the boundary for minimizers of $E^\eta$. One would expect such an implication in view of known results for minimizing and stationary harmonic maps into general target manifolds.

In the work by Duzaar and Steffen \cite{DS1} (see also \cite{DS2}), a partial regularity result up to the boundary is proved for mappings $u:\mathcal{M} \to \mathcal{N}$ ($\mathcal{M}$, $\mathcal{N}$ are Riemannian manifolds) which minimize the Dirichlet energy with respect to the free boundary condition $u(\Sigma) \subset \mathcal{S}$. Here, $\Sigma \subset \partial \mathcal{M}$ is a relatively open subset, and $\mathcal{S} \subset \mathcal{N}$ is a submanifold. A generalization to the case of stationary harmonic maps with a free boundary condition is given in the work by Scheven \cite{scheven} (see also \cite{schevendiss}).

The general idea in \cite{DS1} and \cite{scheven} is to use a reflection construction at the boundary in order to establish a situation which is similar to the setting in the interior. In the case of energy minimizers, one can then follow the ideas by Schoen and Uhlenbeck \cite{SchoenUhlenbeck}. To be more precise, a monotonicity formula at the boundary is derived, and a small-energy-regularity theorem is proved by means of a refined version of the \textquotedblleft{}harmonic replacement\textquotedblright{} idea. Moreover, a special coordinate system is introduced in \cite{DS1} and \cite{scheven} in order to obtain differentiability up to the boundary. Here we also want to point out that the methods used in \cite{hardtkinderlehrer} are quite different from the ones used in \cite{DS1} and \cite{scheven}.

In the book by Simon \cite{simon}, a simplified proof of the small-energy-regularity theorem for minimizing harmonic maps $u:\Omega \to \mathcal{N}$ is presented. Here, $\Omega$ is an open subset of the Euclidean space, and $\mathcal{N}$ is a compact Riemannian manifold. The strategy is to apply the monotonicity formula combined with a lemma by Luckhaus to obtain the reverse \Poincare{} inequality for energy minimizing maps. This together with the so-called harmonic approximation lemma enables the author in \cite{simon} to prove the small-energy-regularity theorem by means of the Morrey-Campanato theory.


In this paper we combine the ideas from \cite{hardtkinderlehrer}, \cite{DS1}, \cite{scheven}, and \cite{simon} to prove a higher regularity result for minimizers of $E^\eta$ up to the boundary. More precisely, we introduce in Section \ref{sec:reflection method} the concept of \textquotedblleft{}almost minimizers\textquotedblright{} and use the reflection method to rewrite the minimization problem at the boundary. In Section \ref{sec:monotonicity}, we establish the monotonicity formula (Lemma \ref{thm:monotonicity})  which is used in Section \ref{sec:reversePoincare} to prove the reverse \Poincare{} inequality (Lemma \ref{thm:poincare}). These results and the technique of harmonic approximation are used in Section \nolinebreak \ref{sec:Hoelder regularity} to prove H{\"o}lder regularity under a small-energy assumption (Lemma \ref{thm:hoeldercontinuity}). In Section \nolinebreak \ref{sec:Higher regularity} we improve this regularity result and obtain a small-energy-regularity theorem which is also valid at the boundary (Theorem \ref{thm:linftygradient}). Finally, we use a covering argument to complete the proof of the main theorem.

In the following, we use the short hand notation $\int_\Omega \cdot = \int_\Omega \cdot \,dx$ which always means integration with respect to the Lebesgue measure.
%
%
\section{Almost minimizers and the reflection method}
\label{sec:reflection method}
\paragraph{Euler-Lagrange equation.}
The existence of minimizers for $E^\eta$ follows with the help of the direct method of the calculus of variations and the compact embedding $H^1(\Omega) \hookrightarrow L^2(\Omega)$. Moreover, we can calculate the Euler-Lagrange equation for the energy functional $E^\eta$ as in the case of minimizing harmonic maps into spheres.
\begin{lem}
  \label{lem:eulerlagrange}
  Let $u_\eta \in \hos$ be a minimizer of $E^\eta$. Then we have
  \begin{align*}
    \int_\Omega \hspace{-0,1cm} \nabla u_\eta : \nabla \varphi - \hspace{-0,1cm} \int_\Omega \hspace{-0,1cm}\abs{\nabla u_\eta}^2 \, \, \, u_\eta \cdot \varphi + \eta^2 \hspace{-0,1cm}\int_{\Omega} \hspace{-0,1cm} u_\eta \cdot H[u_\eta]  \, u_\eta \cdot \varphi
    - \eta^2 \hspace{-0,1cm}\int_\Omega \hspace{-0,1cm}H[u_\eta] \cdot \varphi = 0
  \end{align*}
  for all test functions $\varphi \in C_0^1(\setR^3,\setR^3)$.
\end{lem}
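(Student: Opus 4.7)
The plan is to carry out the standard constrained-variation argument used for the Dirichlet energy of $S^2$-valued maps, with additional care for the nonlocal stray-field term.

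First I would introduce the admissible one-parameter family $u_t := (u_\eta + t\varphi)/|u_\eta + t\varphi|$ for $t$ in a small neighbourhood of $0$. Since $|u_\eta|=1$ a.e.\ and $\varphi\in C_0^1$ is bounded, we have $|u_\eta + t\varphi|\ge 1 - |t|\,\|\varphi\|_\infty > 0$ a.e.\ for $|t|$ small, so the chain rule yields $u_t\in H^1(\Omega,S^2)$; hence $t\mapsto E^\eta(u_t)$ attains a minimum at $t=0$ and is differentiable there. A direct computation gives $\partial_t u_t\big|_{t=0} = \varphi - (u_\eta\cdot\varphi)\,u_\eta =: \psi$, namely the pointwise projection of $\varphi$ onto the tangent plane of $S^2$ at $u_\eta$.

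For the Dirichlet part, differentiating under the integral sign yields $\frac{d}{dt}\int_\Omega |\nabla u_t|^2\big|_{t=0} = 2\int_\Omega \nabla u_\eta : \nabla\psi$. Using the constraint $|u_\eta|=1$, which implies $u_\eta\cdot\partial_j u_\eta = 0$ a.e., an expansion of $\nabla\psi$ yields $\nabla u_\eta : \nabla\psi = \nabla u_\eta : \nabla\varphi - (u_\eta\cdot\varphi)|\nabla u_\eta|^2$, accounting for the first two terms of the Euler--Lagrange identity.

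For the nonlocal term, since $u\mapsto H[u]$ is $\setR$-linear and bounded from $L^2(\Omega)$ into $L^2(\setR^3)$, a Taylor expansion of $y\mapsto y/|y|$ combined with the $L^2$-continuity of $H$ gives $H[u_t] = H[u_\eta] + t\,H[\psi] + o(t)$ in $L^2(\setR^3)$, hence $\frac{d}{dt}\int_{\setR^3}|H[u_t]|^2\big|_{t=0} = 2\int_{\setR^3}H[u_\eta]\cdot H[\psi]$. Writing the curl-free field $H[u_\eta] = \nabla\phi$ for some $\phi$ with $\nabla\phi\in L^2(\setR^3)$, and using that $H[\psi] + \chi_\Omega\psi$ is divergence-free, distributional integration by parts in $\setR^3$ gives the duality identity $\int_{\setR^3}H[u_\eta]\cdot H[\psi] = -\int_\Omega H[u_\eta]\cdot\psi$. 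Substituting $\psi = \varphi - (u_\eta\cdot\varphi)\,u_\eta$ produces the remaining two terms of the asserted equation, and setting the total derivative equal to zero concludes the proof.

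The main obstacle is the rigorous justification of the first-order expansion of the nonlocal term: one must verify that $t^{-1}(H[u_t] - H[u_\eta]) \to H[\psi]$ in $L^2(\setR^3)$ as $t\to 0$, which ultimately rests on the $L^2$-continuity of $H$ together with a dominated-convergence argument applied to the pointwise difference quotient $(u_t - u_\eta)/t$, uniformly bounded by a multiple of $|\varphi|$.
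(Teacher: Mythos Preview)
Your proposal is correct and follows essentially the same route as the paper: the same constrained variation $u_t=(u_\eta+t\varphi)/|u_\eta+t\varphi|$, differentiation at $t=0$, and the key duality identity $\int_{\setR^3} H[v]\cdot H[w] = -\int_\Omega H[v]\cdot w$ to convert the stray-field derivative into an integral over $\Omega$. You have simply written out in detail what the paper calls the ``straightforward calculation as in the case of minimizing harmonic maps into spheres'' together with the stated identity.
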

\begin{proof}
  For convenience we write $u = u_\eta$ and define for a given $\varphi \in C_0^1(\setR^3,\setR^3)$ 
  the comparison function
  \begin{align*}
    u_t = \frac{u + t \, \varphi}{\abs{u + t \, \varphi}}
  \end{align*}
  for $t \in (-\epsilon,\epsilon)$, where $\epsilon > 0$ is sufficiently small. We remark that $u_t$ belongs to the set of admissible functions $\hos$ and $u_0 = u$. Since $u$ is a minimizer for $E^\eta$, we conclude that $E^\eta (u_0) \le E^\eta(u_t)$
  for all $t \in (-\epsilon,\epsilon)$. This in particular implies that $\frac{\text{d}}{\text{dt}} {E^\eta (u_t)}_{|_{t=0}} = 0$.
  A straightforward calculation as in the case of minimizing harmonic maps into spheres (see for example \cite{simon}) combined with the identity 
  \begin{align*}
  \int_{\setR^3} H[v] \cdot H[w] = - \int_{\Omega} H[v] \cdot w
  \end{align*}
  for all $v,w \in L^2(\Omega,\setR^3) $ gives the result. The lemma is proved.
\end{proof}
\paragraph{Almost minimizers.}
As already announced, we use the notion of almost minimizers from \cite{hardtkinderlehrer} in order to handle the non-local term $H[u]$ in $E^\eta$. Therefore, we define for a given subset $U \subset \setR^3$ the diameter $d(U)$ of $U$ by $d(U) = \sup_{x,y \in U} \abs{x-y}$.
We now show that minimizers of $E^\eta$ are almost minimizers of the Dirichlet energy:
\begin{lem}
  \label{lem:almostminimizer}
  For every $\alpha \in (0,1)$ there exists a constant $C=C(\alpha,\Omega)$ such that
  \begin{align*}
    \int_{\Omega \cap U} \abs{\nabla u_\eta}^2 \le \int_{\Omega \cap U} \abs{\nabla v}^2 + C \, \eta^2 \, d(U)^{1+2 \alpha}
  \end{align*}
  for every minimizer $u_\eta$ of $E^\eta$ and every $v \in \hos$ whenever $v=u_\eta$ on $\Omega \setminus U$ for an open subset $U \subset \subset \setR^3$.
\end{lem}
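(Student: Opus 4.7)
The plan is to use directly the inequality $E^\eta(u_\eta) \le E^\eta(v)$. Since $v$ agrees with $u_\eta$ on $\Omega \setminus U$, the Dirichlet energies cancel off $U$, and the task reduces to bounding from above the stray-field excess $\eta^2 \bigl(\int_{\setR^3} \abs{H[v]}^2 - \int_{\setR^3} \abs{H[u_\eta]}^2\bigr)$ by $C\,\eta^2\,d(U)^{1+2\alpha}$.

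I would then set $w := v - u_\eta$, which is supported in $\Omega \cap U$ and satisfies $\abs{w}\le 2$, and combine the linearity $H[v] = H[u_\eta] + H[w]$ with the pairing identity $\int_{\setR^3} H[a]\cdot H[b] = -\int_\Omega H[a]\cdot b$ recalled in the proof of Lemma \ref{lem:eulerlagrange} to rewrite the excess as $\int_{\setR^3}\abs{H[w]}^2 - 2\int_\Omega H[u_\eta]\cdot w$. The first term is harmless: applying the same identity once more, $\int \abs{H[w]}^2 \le \norm{w}_{L^2(\Omega)}^2 \le 4\abs{\Omega \cap U} \le C\,d(U)^3$, which for $\alpha \in (0,1)$ is absorbed into $C(\Omega,\alpha)\,d(U)^{1+2\alpha}$ (the diameter of $\Omega$ entering the constant to handle the case $d(U) \ge 1$).

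For the second term, since $w$ is supported on $\Omega \cap U$, I would apply Hölder's inequality with dual exponents $p,p'$:
\[
  \Bigabs{\int_\Omega H[u_\eta]\cdot w} \;\le\; \norm{H[u_\eta]}_{L^p(\setR^3)}\cdot 2\abs{\Omega\cap U}^{1/p'} \;\le\; 2C_p\,\abs{\Omega}^{1/p}\abs{\Omega\cap U}^{1/p'},
\]
where the last step uses the $L^p$-boundedness of the stray-field operator, $\norm{H[u]}_{L^p(\setR^3)} \le C_p\norm{u}_{L^p(\Omega)}$ for $1<p<\infty$. Combined with $\abs{\Omega \cap U}^{1/p'} \le C\,d(U)^{3/p'}$, the choice $p' = 3/(1+2\alpha) \in (1,3)$ (equivalently $p = 3/(2-2\alpha) \in (3/2,\infty)$) delivers the exponent $3/p' = 1+2\alpha$ exactly, with a constant depending on $\alpha$ through $C_p$ (and blowing up as $\alpha \to 1$) and on $\Omega$ through its volume and diameter.

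The main obstacle I expect is justifying the $L^p$-boundedness of $H$ for $p \ne 2$. Since $H$ amounts, up to multiplication by $\chi_\Omega$, to a Riesz-type singular integral applied to $\chi_\Omega u$, this is classical Calderón--Zygmund theory for $1<p<\infty$; alternatively, one can derive it directly from the explicit representation $H[u] = -\nabla(G \ast \Div(\chi_\Omega u))$ with $G$ the Newton kernel, which is more in line with the elementary flavour of the paper. Everything else in the argument is routine.
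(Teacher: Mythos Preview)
Your proof is correct and follows essentially the same route as the paper: both reduce the stray-field excess to an integral over $\Omega\cap U$ via the pairing identity $\int_{\setR^3} H[a]\cdot H[b] = -\int_\Omega H[a]\cdot b$, then apply H\"older with the same exponent $3/(1+2\alpha)$ together with the $L^q$-boundedness of $H$ (which the paper attributes to the Newton-potential representation, citing Praetorius). The only cosmetic difference is that the paper uses the polarization $\abs{H[v]}^2-\abs{H[u_\eta]}^2 = H[v+u_\eta]\cdot H[v-u_\eta]$ and thus handles the excess in a single term, whereas you expand $\abs{H[u_\eta]+H[w]}^2$ and treat $\int\abs{H[w]}^2$ separately; this costs you the extra (harmless) step of absorbing $d(U)^3$ into $C(\Omega,\alpha)\,d(U)^{1+2\alpha}$.
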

\begin{proof}
 Let $u_\eta$ be a minimizer of $E^\eta$ and let $v \in \hos$ be such that $v=u_\eta$ on $\Omega \setminus U$, where $U \subset \subset \setR^3$ is on open subset. Since $u_\eta$ is a minimizer, we obtain the estimate
  \begin{align*}
    \int_{\Omega \cap U} \abs{\nabla u_\eta}^2 \le \int_{\Omega \cap U} \abs{\nabla v}^2 + \eta^2 \bigg( \int_{\setR^3} \abs{H[v]}^2 - 
    \int_{\setR^3} \abs{H[u_\eta]}^2 \bigg) \,.
  \end{align*}
  Furthermore, we have that
  \begin{align*}
    \int_{\setR^3} \hspace{-0.1cm}\abs{H[v]}^2 - \int_{\setR^3} \hspace{-0.1cm}\abs{H[u_\eta]}^2 
    = \int_{\setR^3}\hspace{-0.1cm} H[v+u_\eta]\cdot H[v-u_\eta]
    = -\int_{\Omega \cap U} \hspace{-0.1cm} H[v+u_\eta]\cdot (v-u_\eta)
    \le 2 \int_{\Omega \cap U} \hspace{-0.1cm}\abs{H[v+u_\eta]} \, .
  \end{align*}
  Now, we write $p= 3/(1+ 2 \alpha)$ and apply the H{\"o}lder inequality to find
  \begin{align*}
    \int_{\setR^3} \abs{H[v]}^2 - \int_{\setR^3} \abs{H[u_\eta]}^2 &\le C \, d(U)^{1+2 \alpha} \, \norm{H[v+u_\eta]}_{p'}
     \le C(\alpha,\Omega) \, d(U)^{1+2 \alpha} \, ,
  \end{align*}
  where we have used that $H$ is a bounded and linear mapping from $L^q(\Omega,\setR^3)$ to $L^q(\setR^3,\setR^3)$ for every $1<q<\infty$. The latter statement follows directly from the representation formula for $H$ via the Newton-potential (see for example
  Praetorius \cite[Theorem 5.1]{praetorius}). The lemma is proved.
\end{proof}
\paragraph{Choice of coordinates at the boundary.}
In order to obtain a higher regularity result for minimizers up to the boundary, we need a suitable smoothness condition for $\partial \Omega$. Therefore, we assume in the sequel that the domain $\Omega \subset \setR^3$ is a bounded $C^{2,1}$-domain.
\begin{definition}
  \label{def:c21domain}
  A bounded domain $\Omega \subset \setR^3$ is called $C^{2,1}$-domain if for every point 
  $x_0 \in \partial \Omega$ there exist an open neighborhood $U_0$ of $x_0$ in $\setR^3$, 
  an affine transformation $T_0 \xi = R_0 \xi + b_0$ ($R_0 \in SO(3)$, $b_0\in \setR^3$), 
  and a $C^{2,1}$-mapping $g_0:\pi_{\setR^2} \big(T_0(U_0)\big) \to \setR$ such that the following
  equivalences hold true for $x \in T_0(U_0)$:
    \begin{center}
        $x \in T_0(\Omega)$ $\Leftrightarrow$  $x_3 > g_0(x_1,x_2)$ \quad and \quad $x\in T_0(\partial \Omega)$ $\Leftrightarrow$ $x_3 = g_0(x_1,x_2)$.
    \end{center}
   In particular, the boundary of $\Omega$ is locally the graph of a $C^{2,1}$-function, and 
   \nolinebreak $\Omega$ is locally located on one side of the boundary.
\end{definition}
\noindent With the help of the inverse mapping theorem, we now construct parallel coordinates at $\partial \Omega$. These kind of coordinates have also been used in \cite{DS1}, \cite{scheven} and enable us to prove the differentiability of minimizers up to the boundary. In the following we write $B_R = B_R(0)$,
\begin{align*}
 B_R^{+} = \set{(x_1,x_2,x_3) \in B_R \, | \, x_3>0}, \quad \text{and} \quad B_R^{-} = \set{(x_1,x_2,x_3) \in B_R \, | \, x_3<0}
\end{align*}
for every $R>0$. Furthermore, we write $x'=(x_1,x_2,-x_3)$ for every $x=(x_1,x_2,x_3) \in \setR^3$.
\begin{lem}
  \label{lem:coordinates}
 For every point $x_0 \in \partial \Omega$ there exist an open neighborhood $U_0$ of $x_0$ in $\setR^3$ and a 
 $C^1$-diffeomorphism $\psi_0:U_0 \to B_R$ such that the following holds:
 We have $\psi_0 (x_0) = 0$, $\nabla \psi_0$ is Lipschitz continuous, and
    \begin{center}
        $\psi_0 (\Omega \cap U_0)$  $=$  $B_R^{+}$, \quad 
        $\psi_0 (\partial \Omega \cap U_0)$   $=$   $B_R \cap (\setR^2 \times \set{0})$, \quad
        $\psi_0 \big(\overline{\Omega}^{C} \cap U_0\big)$   $=$  $B_R^{-}$.
    \end{center}
 Furthermore, the matrix valued function $\A:B_R \to \setR^{3\times3}$ defined by
 \begin{align*}
   \A(x) 
  =&
  \begin{cases}
   \sqrt{\abs{\det \nabla \psi_0^{-1}(x)}} \, ( \nabla \psi_0^{-1}(x) )^{-1} 
  \quad &\text{if } x_3 \ge 0
    \\
    \sqrt{\abs{\det \nabla \psi_0^{-1}(x')}} \, \diag (1,1,-1) ( \nabla \psi_0^{-1}(x') )^{-1}  \quad &\text{if } x_3 < 0
  \end{cases}
 \end{align*}
  satisfies the following properties:
 \begin{enumerate}
  \item[(i)] $\A\A^T$ is Lipschitz continuous on $B_R$ with Lipschitz constant $L>0$.
  \item [(ii)]$\frac{1}{\beta} \abs{\xi}^2 \le \A(x)\A(x)^T \xi \cdot \xi \le \beta \, \abs{\xi}^2$ 
 for all $\xi \in \setR^3$ and all $x \in B_R$, where $\beta>0$ is a constant independent
 of $\xi$ and $x$.
  \item[(iii)] $M = \sup_{x \in B_R} \abs{\A(x)^{-1}}< \infty$ and $N =  \sup_{x \in B_R} \abs{\A(x)}< \infty$.
  \end{enumerate}
\end{lem}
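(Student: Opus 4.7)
The plan is to construct $\psi_0$ from Definition \ref{def:c21domain} via parallel (Fermi) coordinates and then verify the three properties of $\A$; the subtlety is that property (i) essentially forces the choice of parallel coordinates over a more naive graph-straightening.

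Near $x_0 \in \partial\Omega$, Definition \ref{def:c21domain} gives an affine $T_0$ and $g_0 \in C^{2,1}$ so that $\partial\Omega$ is locally the graph $x_3 = g_0(x_1,x_2)$ with $\Omega$ lying above. Parametrize the boundary by $\sigma(y_1,y_2) = T_0^{-1}(y_1,y_2,g_0(y_1,y_2))$ and let $\nu(y_1,y_2)$ be the inward unit normal to $\partial\Omega$ at $\sigma(y_1,y_2)$; both are $C^{1,1}$ since $g_0 \in C^{2,1}$. Define
\begin{align*}
  \tilde{\psi}(y_1,y_2,y_3) = \sigma(y_1,y_2) + y_3\,\nu(y_1,y_2).
\end{align*}
At $y = 0$ the columns of $\nabla\tilde\psi$ are $\partial_1\sigma(0), \partial_2\sigma(0), \nu(0)$, which are linearly independent, so the inverse mapping theorem gives a $C^{1,1}$-diffeomorphism from a small ball onto a neighborhood of $x_0$. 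After a linear rescaling I assume the image of $B_R$ is that neighborhood, set $U_0 = \tilde\psi(B_R)$ and $\psi_0 = \tilde{\psi}^{-1}$. The identifications $\psi_0(\Omega\cap U_0)=B_R^+$ etc.\ follow from the sign of $y_3$, using that moving inward from $\partial\Omega$ by a small amount enters $\Omega$ by the graph representation. Lipschitz continuity of $\nabla\psi_0 = (\nabla\tilde\psi)^{-1}\circ\psi_0$ is inherited from Lipschitz continuity of $\nabla\tilde\psi$ together with the fact that matrix inversion is Lipschitz on the set of uniformly invertible matrices.

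Writing $J = \nabla\psi_0^{-1}$, properties (ii) and (iii) are almost immediate. On $\overline{B_R^+}$, $J$ and $J^{-1}$ are bounded, and the reflected formula on $B_R^-$ differs only by a left multiplication by $\diag(1,1,-1)$ and an isometric change of argument, so (iii) follows. For (ii) one has on $B_R^+$
\begin{align*}
  \A\A^T = \abs{\det J}\,(J^T J)^{-1},
\end{align*}
which is uniformly elliptic since $J^T J$ is; the formula on $B_R^-$ is a congruence of the same object by $\diag(1,1,-1)$ composed with $x\mapsto x'$, so ellipticity is preserved.

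The decisive property is (i), and this is where I expect the real work. On each open half-ball $B_R^{\pm}$ separately, $\A\A^T$ is Lipschitz because $J$, $\det J$ and (by uniform ellipticity) $(J^T J)^{-1}$ are, and $x\mapsto x'$ is an isometry. The delicate step is matching across $\{x_3=0\}$: there $x = x'$, so continuity reduces to
\begin{align*}
  \diag(1,1,-1)\,(J^T J)^{-1}\,\diag(1,1,-1) = (J^T J)^{-1},
\end{align*}
which holds iff $(J^T J)_{13} = (J^T J)_{23} = 0$. This is exactly what parallel coordinates secure: at $y_3 = 0$ the first two columns of $J = \nabla\tilde\psi$ are the tangent vectors $\partial_i\sigma$ and the third column is $\nu$, so $\partial_i\sigma \cdot \nu = 0$ for $i=1,2$. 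Once continuity across the plane is in place, the global Lipschitz estimate on $B_R$ follows from the one-sided estimates by splitting any segment from $x\in B_R^+$ to $y\in B_R^-$ at its intersection with $\{x_3=0\}$ and adding the two Lipschitz bounds. This matching identity is exactly what would fail for a simple graph-straightening $(x_1,x_2,x_3)\mapsto(x_1,x_2,x_3-g_0)$, and is the sole reason a more refined construction is necessary.
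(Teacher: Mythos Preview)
Your proof is correct and follows essentially the same route as the paper: both construct $\psi_0^{-1}$ as parallel (Fermi) coordinates over the boundary graph, and both pin the Lipschitz continuity of $\A\A^T$ across $\{x_3=0\}$ on the block-diagonal structure of $J^TJ$ there, which comes from orthogonality of the tangent vectors $\partial_i\sigma$ and the normal direction. The only cosmetic difference is that the paper uses the un-normalized normal $(-\partial_1 g_0,-\partial_2 g_0,1)$ in the $T_0$-coordinates and verifies the block structure \eqref{eq:Lipschitz} by explicit matrix computation, whereas you use the unit normal and argue structurally; both yield the same matching identity.
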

\begin{proof}
 Let $x_0 \in \partial \Omega$ and let $U_0$, $T_0$, and $g_0$ be as in Definition \ref{def:c21domain}. Without loss of generality, we can assume that $T_0(x_0)=0$, $g_0(0)=0$, and $\nabla g_0(0) = 0$ (apply an affine transformation if necessary). We now define the mapping
  \begin{align*}
    &\varphi_0: \pi_{\setR^2}\big(T_0(U_0)\big) \times \setR \to \setR^3:
    (x_1,x_2,x_3) \mapsto \big(x_1,x_2,g_0(x_1,x_2)\big) + x_3 \big(
    - \partial_1 g_0(x_1,x_2), -\partial_2  g_0(x_1,x_2),1\big) \,.
  \end{align*}
  Obviously, $\varphi_0$ is of class $C^1$, $\varphi_0(0)=0$, and $\nabla \varphi_0 (0) = I$. 
  Thanks to the inverse mapping theorem, we can find an open neighborhood $V_0$ of $0$ in $\setR^3$ and a radius $R>0$ such that $\varphi_0: B_R \to V_0 \subset T_0(U_0)$
  is a $C^1$-diffeomorphism. After choosing a smaller radius $R>0$, we also obtain for $x=(x_1,x_2,x_3) \in B_R$ that
      $\varphi_0 (x_1,x_2,x_3) \in T_0(\Omega)$ if $x_3>0$,
      $\varphi_0 (x_1,x_2,x_3) \in T_0(\partial \Omega)$ if $x_3=0$,
      and 
$\varphi_0 (x_1,x_2,x_3) \in T_0\big(\overline{\Omega}^C\big)$ if $x_3<0$.
  Moreover, one easily verifies that
  \begin{align*}
    \big(\nabla \varphi_0 (x_1,x_2,0)\big)^{-1} = \Gamma(x_1,x_2)
    \left(
    \begin{array}{ccc}
      1+ \big(\partial_2 g_0)^2 & - \partial_1 g_0 \, \partial_2 g_0 & \partial_1 g_0
      \\
      -\partial_1 g_0 \, \partial_2 g_0 & 1 + \big( \partial_1 g_0\big)^2 & \partial_2 g_0
      \\
      - \partial_1 g_0 & - \partial_2 g_0 & 1
    \end{array}
    \right)
  \end{align*}
  for every $(x_1,x_2,0) \in B_R$, where on the right hand side $\partial_1 g_0$, $\partial_2 g_0$ 
  have to be evaluated at $(x_1,x_2)$ and the function $\Gamma(x_1,x_2)$ is given by
    $\Gamma(x_1,x_2)=\big(1+\big(\partial_1 g_0(x_1,x_2) \big)^2 + \big(\partial_2 g_0(x_1,x_2) \big)^2\big)^{-1}$.
 Another elementary calculation shows that
  \begin{align}
  \label{eq:Lipschitz}
    \big(\nabla \varphi_0 (x_1,x_2,0)\big)^{-1} \Big(\big(\nabla \varphi_0 (x_1,x_2,0)\big)^{-1}\Big)^{T} = 
    \left(
    \begin{array}{ccc}
      \ast &  \ast & 0
      \\
      \ast &  \ast & 0
      \\
      0 & 0 & \ast
    \end{array}
    \right) \,,
  \end{align}
  where the matrix entries with \textquotedblleft$\ast$\textquotedblright are not specified in detail.
  We now define
  \begin{align*}
    W_0 = T^{-1}_0(V_0) \qquad \text{and } \qquad \psi_0 = \varphi_0^{-1} \circ T_0 : W_0 \to B_R \,.
  \end{align*}
  Then the function $\psi_0$ satisfies all stated properties after choosing a smaller radius $R>0$. 
  For properties (ii) and (iii) we use standard compactness arguments, and regarding property (i) we remark that 
  \begin{align*}
   \A(x) \A(x)^T = \abs{\det \nabla \varphi_0(x)} \, ( \nabla \varphi_0(x) )^{-1} \big(( \nabla \varphi_0(x) )^{-1}\big)^T
  \quad
  \end{align*}
   for $x_3 \ge 0$ and
  \begin{align*}
   &\A(x) \A(x)^T=  \abs{\det \nabla \varphi_0(x')} \, \diag (1,1,-1) ( \nabla \varphi_0(x') )^{-1} 
    \big(( \nabla \varphi_0(x') )^{-1}\big)^T  \diag (1,1,-1)  
 \end{align*}
 for $x_3<0$. The Lipschitz continuity of $\nabla g_0$ and $\nabla^2 g_0$ implies that $\nabla \varphi_0$ is Lipschitz continuous as well. 
 This combined with \eqref{eq:Lipschitz} guarantees the Lipschitz continuity of 
 $\A\A^T$ on the whole ball $B_R$. The lemma is proved.
\end{proof}

\paragraph{The reflection method.}
In the sequel we fix a point $x_0 \in \partial \Omega$ and choose the corresponding coordinates $\psi_0 : U_0 \to B_R$ from Lemma \ref{lem:coordinates}. For a given minimizer $u_\eta$ of $E^\eta$, we define the function $\overline{u}_\eta \in H^1(B_R,S^2)$ by
\begin{align*}
  \overline{u}_\eta(x) = 
  \begin{cases}
    u_\eta \circ \psi_0^{-1}(x) \quad &\text{if } x_3 \ge 0
    \\
    u_\eta \circ \psi_0^{-1}(x') \quad &\text{if } x_3 < 0
  \end{cases}
\end{align*}
for $x \in B_R$. This means that we first flatten the boundary of $\Omega$ near $x_0$ and then define $\overline{u}_\eta$ on the whole ball $B_R$ by reflection with respect to the hyperplane $\setR^2 \times \set{0}$. We also define
\begin{align*}
  \overline{H}[\overline{u}_\eta](x) =
  \begin{cases}
    H[u_\eta] \circ \psi_0^{-1}(x) \abs{ \det \nabla \psi_0^{-1}(x)} \quad &\text{if } x_3 \ge 0
    \\
    H[u_\eta] \circ \psi_0^{-1}(x') \abs{ \det \nabla \psi_0^{-1}(x')}  \quad &\text{if } x_3 < 0
  \end{cases}
\end{align*}
 for $x \in B_R$. With the help of the transformation formula, we see that $\overline{H}[\overline{u}_\eta]$ belongs to $L^p(B_R,\setR^3)$ for every $1<p<\infty$. In the next two lemmas, we reformulate Lemmas \ref{lem:eulerlagrange} and \ref{lem:almostminimizer} for $\overline{u}_\eta$ on $B_R$.
\begin{lem}
  \label{lem:eulerlagrangeloc}
  The Euler-Lagrange equation for $\overline{u}_\eta$ on $B_R$ reads as
  \begin{align*}
    0 = \int_{B_R} \nabla \overline{u}_\eta \A : \nabla \overline{\varphi} \A - \int_{B_R} \abs{\nabla \overline{u}_\eta \A}^2 \overline{u}_\eta \cdot \overline{\varphi} + \eta^2 \int_{B_R}\overline{u}_\eta \cdot \overline{H}[\overline{u}_\eta]  \, \overline{u}_\eta \cdot \overline{\varphi} - \eta^2 \int_{B_R} \overline{H}[\overline{u}_\eta] \cdot \overline{\varphi}
  \end{align*}
  for all test functions $\overline{\varphi} \in C_0^\infty(B_R,\setR^3)$, where the matrix valued function $\A$ is defined as in Lemma \ref{lem:coordinates}.
\end{lem}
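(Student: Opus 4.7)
The plan is to derive the identity from the Euler-Lagrange equation in Lemma \ref{lem:eulerlagrange} by change of variables via $\psi_0$ together with the reflection symmetry built into $\overline{u}_\eta$, $\overline{H}[\overline{u}_\eta]$, and $\A$. Given a test function $\overline{\varphi} \in C_0^\infty(B_R,\setR^3)$, I would first decompose it into its symmetric and antisymmetric parts under $x \mapsto x'$, namely $\overline{\varphi} = \overline{\varphi}_s + \overline{\varphi}_a$ with $\overline{\varphi}_s(x) = \tfrac{1}{2}(\overline{\varphi}(x) + \overline{\varphi}(x'))$ and $\overline{\varphi}_a(x) = \tfrac{1}{2}(\overline{\varphi}(x) - \overline{\varphi}(x'))$, and then treat the two parts separately.

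For the symmetric part, the idea is to pull $\overline{\varphi}_s$ back to $\Omega$ by setting $\varphi := \overline{\varphi}_s \circ \psi_0$ on $U_0$, extended by zero outside $U_0$, which gives an admissible test function in $C_0^1(\setR^3,\setR^3)$ for Lemma \ref{lem:eulerlagrange}. Applying Lemma \ref{lem:eulerlagrange} with this $\varphi$ and performing the change of variables $x = \psi_0(y)$ on $U_0 \cap \Omega$, the chain-rule identity $\nabla u_\eta(\psi_0^{-1}(x)) = \nabla \overline{u}_\eta(x) \, (\nabla \psi_0^{-1}(x))^{-1}$ (together with the analogous formula for $\varphi$) combined with the definition of $\A$ on $B_R^+$ absorbs the Jacobian into $\A$ and turns the Dirichlet term into $\int_{B_R^+} \nabla \overline{u}_\eta \A : \nabla \overline{\varphi}_s \A$, and similarly the $\abs{\nabla u_\eta}^2$-term into $\int_{B_R^+} \abs{\nabla \overline{u}_\eta \A}^2 \, \overline{u}_\eta \cdot \overline{\varphi}_s$. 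The extra factor $\abs{\det \nabla \psi_0^{-1}}$ placed in the definition of $\overline{H}[\overline{u}_\eta]$ is precisely what is needed so that the change of variables in the remaining two $H$-terms yields $\int_{B_R^+}$ of the corresponding expressions on the right-hand side of the lemma. Next I would verify that $\overline{u}_\eta$, $\overline{H}[\overline{u}_\eta]$, $\nabla \overline{u}_\eta \A$, and hence $\abs{\nabla \overline{u}_\eta \A}^2$ are all invariant under $x \mapsto x'$; the crucial computation is that on $B_R^-$ the reflected chain rule produces an extra factor $\diag(1,1,-1)$ on the right of $\nabla \overline{u}_\eta$, which is exactly canceled by the same factor built into $\A$ on $B_R^-$. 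Together with the symmetry of $\overline{\varphi}_s$, this upgrades the identity from $B_R^+$ to the asserted identity on $B_R$, both sides merely picking up a factor of $2$ that cancels.

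For the antisymmetric part, the same parity computation shows that $\nabla \overline{\varphi}_a \A$ is antisymmetric under $x \mapsto x'$, while $\nabla \overline{u}_\eta \A$, $\overline{u}_\eta$, $\overline{H}[\overline{u}_\eta]$, and $\abs{\nabla \overline{u}_\eta \A}^2$ are symmetric; hence each of the four integrands appearing in the statement becomes an odd function of $x$ under the reflection, and every integral over the symmetric domain $B_R$ vanishes individually. Adding the symmetric and antisymmetric contributions then gives the result for arbitrary $\overline{\varphi} \in C_0^\infty(B_R,\setR^3)$. The main obstacle I anticipate is essentially bookkeeping: one has to trace through how the two $\diag(1,1,-1)$ factors — one coming from the reflected chain rule on $B_R^-$, one built into $\A$ on $B_R^-$ — combine to produce the claimed symmetry of $\nabla \overline{u}_\eta \A$, and this is exactly the feature that motivates the somewhat unusual form of $\A$ chosen in Lemma \ref{lem:coordinates}.
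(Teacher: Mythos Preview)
Your proposal is correct and follows essentially the same approach as the paper. The only organizational difference is that the paper, rather than decomposing $\overline{\varphi}$ into its symmetric and antisymmetric parts, simply applies Lemma~\ref{lem:eulerlagrange} once to $\varphi = \overline{\varphi}\circ\psi_0$ (yielding the identity on $B_R^+$) and a second time to the pullback of the reflected test function $\overline{\overline{\varphi}}(x)=\overline{\varphi}(x')$ (yielding the identity on $B_R^-$), then adds; this is equivalent to your symmetric/antisymmetric split, since summing the paper's two identities automatically kills the odd part and doubles the even part.
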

\begin{proof}
 For a given $\overline{\varphi} \in C_0^\infty(B_R,\setR^3)$, we define $\varphi = \overline{\varphi} \circ \psi_0 \in C^1_0(\setR^3,\setR^3)$ with $\support \varphi \subset U_0$. Lemma \ref{lem:eulerlagrange} implies the identity
  \begin{align*}
    0=\int_{\Omega \cap U_0} \nabla u_\eta : \nabla \varphi - \int_{\Omega \cap U_0} \abs{\nabla u_\eta}^2 \, \, \, u_\eta \cdot \varphi 
 + \eta^2 \int_{{\Omega \cap U_0}}  u_\eta \cdot H[u_\eta] \, u_\eta \cdot \varphi - \eta^2 \int_{\Omega \cap U_0} H[u_\eta] \cdot \varphi \,.
  \end{align*}
  From here we obtain that
  \begin{align*}
    0 =&\int_{B_R^{+}} \nabla \overline{u}_\eta \A : \nabla \overline{\varphi} \A - \int_{B_R^{+}} \abs{\nabla \overline{u}_\eta \A}^2 \overline{u}_\eta \cdot \overline{\varphi} + \eta^2 \int_{B_R^{+}} \overline{u}_\eta \cdot \overline{H}[\overline{u}_\eta] \, \, \overline{u}_\eta \cdot \overline{\varphi} - \eta^2 \int_{B_R^{+}} \overline{H}[\overline{u}_\eta] \cdot \overline{\varphi}\, ,
  \end{align*}
   where we have used the transformation formula and $\psi_0(\Omega \cap U_0) = B_R^{+}$.
  Instead of $\overline{\varphi}$, we can also use the test function $\overline{\overline{\varphi}} \in C_0^\infty(B_R,\setR^3)$ defined by
   $\overline{\overline{\varphi}}(x) = \overline{\varphi}(x')$
for $x \in B_R$.
Another application of the transformation formula leads to the same identity for $\overline{u}_\eta$ and $\overline{\varphi}$, but this time with integrals taken over $B_R^{-}$. The lemma is proved.
\end{proof}
Similarly, we find the following lemma:
\begin{lem}
  \label{lem:almostminimizerloc}
  For every $\alpha \in (0,1)$ there is a constant $C=C(\alpha,\Omega)$ such that
  \begin{align*}
    \int_{U} \abs{\nabla \overline{u}_\eta \A}^2 \le \int_{U} \abs{\nabla \overline{v} \A}^2 + C \, \eta^2 \, d(U)^{1+2 \alpha}
  \end{align*}
  for every function $\overline{v} \in \hbs$ whenever $\overline{v}=\overline{u}_\eta$ on $B_R \setminus U$ for an open subset $U \subset \subset B_R$.
\end{lem}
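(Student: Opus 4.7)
The plan is to reduce Lemma \ref{lem:almostminimizerloc} to the interior statement Lemma \ref{lem:almostminimizer} via the coordinate change $\psi_0$, exploiting the reflection structure of $\overline{u}_\eta$ to build \emph{two} competitors for $u_\eta$ out of the single competitor $\overline{v} \in H^1(B_R,S^2)$.

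Write $R = \diag(1,1,-1)$ for the reflection $x \mapsto x'$ and split $U = U^+ \cup U^-$ with $U^\pm = U \cap B_R^\pm$. Define
\begin{align*}
v_1(y) := \overline{v}\bigl(\psi_0(y)\bigr), \qquad v_2(y) := \overline{v}\bigl((\psi_0(y))'\bigr) \qquad (y \in \Omega \cap U_0),
\end{align*}
and extend both by $u_\eta$ on $\Omega \setminus U_0$. Set $W_1 = \psi_0^{-1}(U^+)$ and $W_2 = \psi_0^{-1}(R(U^-))$; both are open subsets of $\Omega$, relatively compact in $U_0$, and satisfy $d(W_i) \le L\, d(U)$ with $L$ the Lipschitz constant of $\psi_0^{-1}$. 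Using the assumption $\overline{v} = \overline{u}_\eta$ on $B_R \setminus U$ together with the piecewise definition of $\overline{u}_\eta$, one checks that $v_i = u_\eta$ on $\Omega \setminus W_i$ and that $v_1, v_2 \in H^1(\Omega,S^2)$, so they are admissible competitors in the sense of Lemma \ref{lem:almostminimizer}.

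Applying Lemma \ref{lem:almostminimizer} to the pairs $(v_1,W_1)$ and $(v_2,W_2)$ and adding the two inequalities yields
\begin{align*}
\int_{W_1} |\nabla u_\eta|^2 + \int_{W_2} |\nabla u_\eta|^2 \le \int_{W_1} |\nabla v_1|^2 + \int_{W_2} |\nabla v_2|^2 + C\,\eta^2\, d(U)^{1+2\alpha}.
\end{align*}
A direct chain-rule computation, of the same flavor as the one underlying Lemma \ref{lem:eulerlagrangeloc}, gives
\begin{align*}
|\nabla u_\eta(\psi_0^{-1}(x))|^2\, |\det \nabla \psi_0^{-1}(x)| = |\nabla \overline{u}_\eta(x)\, \A(x)|^2 \qquad \text{for } x \in B_R^+,
\end{align*}
together with the analogous identity on $B_R^-$; the same identities hold with $(u_\eta,\overline{u}_\eta)$ replaced by $(v_1,\overline{v})$ on $B_R^+$ and by $(v_2,\overline{v})$ on $B_R^-$. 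Changing variables so that $\int_{W_1} \leftrightarrow \int_{U^+}$ and $\int_{W_2} \leftrightarrow \int_{U^-}$ and then using $\int_U = \int_{U^+} + \int_{U^-}$ transforms the displayed bound into $\int_U |\nabla \overline{u}_\eta\, \A|^2 \le \int_U |\nabla \overline{v}\, \A|^2 + C\,\eta^2\, d(U)^{1+2\alpha}$, which is the claim.

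The hard part, I expect, will be the change-of-variables computation on $B_R^-$: one has to track carefully how the factor $R$ baked into $\A$ for $x_3 < 0$ cancels the factor $R$ arising from differentiating the reflection, both in $\overline{u}_\eta$ itself and in $v_2 = \overline{v} \circ R \circ \psi_0$, so that the two half-space integrals cleanly reassemble into a single integral of $|\nabla(\cdot)\,\A|^2$ over $U$. Everything else is a routine application of the interior lemma, and the resulting constant $C$ depends only on $\alpha$, $\Omega$, and the $C^1$-norm of $\psi_0^{\pm 1}$.
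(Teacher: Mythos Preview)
Your proposal is correct and follows essentially the same route as the paper: two competitors are built from $\overline{v}$ (the paper writes them as $\overline{v}$ and $\overline{\overline{v}}(x)=\overline{v}(x')$, which after composing with $\psi_0$ are exactly your $v_1$ and $v_2$), Lemma~\ref{lem:almostminimizer} is applied twice, and the change of variables turns the two resulting inequalities into the $U^+$- and $U^-$-parts of the claim. The only cosmetic difference is that the paper introduces a cutoff $\varphi\in C_0^\infty(U_0)$ and an $H^1$-extension of $u_\eta$ to $\setR^3$ in order to write the competitor as $w=\varphi v+(1-\varphi)u_\eta$, whereas you glue $v_i$ directly to $u_\eta$; both work because $\overline{v}=\overline{u}_\eta$ near $\partial B_R$ forces $v_i=u_\eta$ in a full neighbourhood of $\partial U_0\cap\Omega$, so the glued map is genuinely in $H^1(\Omega,S^2)$.
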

\begin{proof}
 Let $\overline{v} \in \hbs$ be such that $\overline{v}=\overline{u}_\eta$ on 
 $B_R \setminus U$ for an open subset $U \subset \subset B_R$. We set $V = \psi_0^{-1} (U) \subset \subset U_0$ and choose a function $\varphi \in C_0^\infty(U_0)$ with  $\varphi \equiv 1$ on $V$. Since $\Omega$ is a bounded $C^{2,1}$-domain, we can extend $u_\eta$ to a function in $H^1(\setR^3,\setR^3)$. We now define the functions $v = \overline{v} \circ \psi_0 \in H^1(U_0,S^2)$ and
 $w = \varphi v + (1- \varphi) u_\eta \in H^1(\setR^3,\setR^3)$. From this definitions we read off that $w \in H^1(\Omega,S^2)$, $w=v$ on $\Omega \cap V$, and $w = u_\eta$ on $\Omega \setminus V$. Therefore, we obtain from Lemma \ref{lem:almostminimizer} the estimate
\begin{align*}
 \int_{\Omega \cap V} \abs{\nabla u_\eta}^2 \le \int_{\Omega \cap V} \abs{\nabla v}^2 + C \, \eta^2 d(V)^{1+2 \alpha} \,.
\end{align*}
An application of the transformation formula shows that
\begin{align*}
 \int_{\Omega \cap V} \abs{\nabla u_\eta}^2 = \int_{B_R^+ \cap U} \abs{\nabla u_\eta}^2 \circ \psi_0^{-1} \abs{\det \nabla \psi_0^{-1}} = \int_{B_R^+ \cap U} \abs{\nabla \overline{u}_\eta \A}^2 \,,
\end{align*}
where we have used the identity $\nabla (u_\eta \circ \psi_0^{-1}) = \nabla u_\eta \circ \psi_0^{-1} \, \nabla \psi_0^{-1}$. In the same manner we obtain
\begin{align*}
 \int_{\Omega \cap V} \abs{\nabla v}^2 = \int_{B_R^+ \cap U} \abs{\nabla \overline{v} \A}^2 \,.
\end{align*}
Thanks to the Lipschitz continuity of $\psi_0^{-1}$ with Lipschitz constant C, we have the estimate $d(V) \le C d(U)$, hence
\begin{align*}
 \int_{B_R^+ \cap U} \abs{\nabla \overline{u}_\eta \A}^2 \le
 \int_{B_R^+ \cap U} \abs{\nabla \overline{v} \A}^2 + C \, \eta^2 d(U)^{1+2\alpha} \,.
\end{align*}
Instead of $\overline{v}$, we can also use $\overline{\overline{v}} \in \hbs$ defined by
 $\overline{\overline{v}}(x_1,x_2,x_3) = \overline{v}(x_1,x_2,-x_3)$.
In particular, we have $\overline{\overline{v}} = \overline{u}_\eta$ on $B_R \setminus \tilde{U}$, where $\tilde{U} = \set{(x_1,x_2,x_3) \in \setR^3 \, | \,(x_1,x_2,-x_3) \in U}$.
This shows
\begin{align*}
 \int_{B_R^+ \cap \tilde{U}} \abs{\nabla \overline{u}_\eta \A}^2 \le
 \int_{B_R^+ \cap \tilde{U}} \abs{\nabla \overline{\overline{v}} \A}^2 + C \, \eta^2 d(\tilde{U})^{1+2\alpha} \,.
\end{align*}
The transformation formula and $d(U) = d(\tilde{U})$ imply the estimate
\begin{align*}
 \int_{B_R^- \cap U} \abs{\nabla \overline{u}_\eta \A}^2 \le
 \int_{B_R^- \cap U} \abs{\nabla \overline{v} \A}^2 + C \, \eta^2 d(U)^{1+2\alpha} \,.
\end{align*}
The lemma is proved.
\end{proof}
In view of Lemmas \ref{lem:eulerlagrange}, \ref{lem:almostminimizer}, \ref{lem:coordinates}, \ref{lem:eulerlagrangeloc}, and  \ref{lem:almostminimizerloc}, we formulate the following assumptions:
\begin{assumption}[$\mathbf{A1}$]
 The set $\Omega \subset \setR^3$ is a bounded domain, and the matrix valued function $\A:\Omega \to \setR^{3\times 3}$ meets the following conditions:
 \begin{enumerate}
  \item[(i)] $\A\A^T$ is Lipschitz continuous on $\Omega$ with Lipschitz constant $L>0$.
  \item[(ii)] $\frac{1}{\beta} \abs{\xi}^2 \le \A(x)\A(x)^T \xi \cdot \xi \le \beta \, \abs{\xi}^2$ 
 for all $\xi \in \setR^3$ and all $x \in \Omega$, where $\beta>0$ is a constant independent
 of $\xi$ and $x$.
  \item[(iii)] $M = \sup_{x \in \Omega} \abs{\A(x)^{-1}}< \infty$ and $N =  \sup_{x \in \Omega} \abs{\A(x)}< \infty$.
  \end{enumerate}
 Furthermore, the function $u \in H^1(\Omega,S^2)$ satisfies the estimate
\begin{align*}
  \int_{U} \abs{\nabla u \A}^2 \le \int_{U} \abs{\nabla v \A}^2 + C \, \eta^2 \, d(U)^{1+2 \alpha}
\end{align*}
 for every $\alpha \in (0,1)$ and every comparison function $v \in \hos$ whenever $v=u$ on $\Omega \setminus U$ for an open subset $U \subset \subset \Omega$. Here, the constant $\eta>0$ is fixed, and $C=C(\alpha)>0$ depends only on $\alpha$.
\end{assumption}
\begin{assumption}[$\mathbf{A2}$]
 Let $u$ and $\A$ be as in (A1). There is a function  $f \in \bigcap_{1<p<\infty} L^p(\Omega,\setR^3)$
such that
 \begin{align*}
    0 =& \int_{\Omega} \nabla u \A : \nabla \varphi \A - \int_{\Omega} \abs{\nabla u \A}^2 u \cdot \varphi -\eta^2 \int_\Omega f \cdot \varphi
  \end{align*}
 for every $\varphi \in C_0^\infty(\Omega,\setR^3)$.
\end{assumption}
\begin{remark}
 In the sequel we say that a constant $C$ depends on the ``typical parameters'' if \nolinebreak $C$ depends only on $L,M,N,\beta,d(\Omega)$, and certain $L^p$-norms of $f$.
\end{remark}

\section{The monotonicity formula}
\label{sec:monotonicity}
In this section we prove the monotonicity formula for functions which satisfy assumption (A1). First, we derive a monotonicity formula for points $a \in \Omega$ with $\A(a)\A(a)^T = I$, and afterwards, we use a coordinate transformation in order to obtain the result for arbitrary points.
\begin{lem}
\label{lem:monotonicity}
 Suppose $u$ and $\A$ satisfy (A1) and $a \in \Omega$ is a point such that $\A(a)\A(a)^T = I$.
 Then the mapping
  \begin{align*}
    t \mapsto e^{c \, t} \, \frac{1}{t} \int_{B_t(a)} \, 
    \abs{\nabla u \A}^2 + C \, \eta^2 \, t^{2 \alpha}
  \end{align*}
  is monotone increasing for every $\alpha \in (0,1)$, where
  the constants $c,C>0$ depend only on $\alpha,L,M$, and $d(\Omega )$.
\end{lem}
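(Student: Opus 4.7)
The plan is to derive the classical Schoen--Uhlenbeck-type monotonicity via comparison with a radial extension, then absorb the two sources of error separately: the variation of $\A$ near $a$ into an exponential integrating factor, and the almost-minimizer deficit from (A1) into an additive correction of order $\eta^2 t^{2\alpha}$.

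For almost every $t \in (0, R_0)$ with $R_0$ chosen so that $B_{R_0}(a) \subset\subset \Omega$, Fubini guarantees that the trace of $u$ on $\partial B_t(a)$ lies in $H^1(\partial B_t(a), S^2)$. For such $t$ I define the radial extension $v_t(x) := u(a + t(x-a)/|x-a|)$ for $x \in B_t(a)$ and $v_t := u$ otherwise, so $v_t \in H^1(\Omega, S^2)$ and polar coordinates give
\[
\int_{B_t(a)} |\nabla v_t|^2 \, dx = t \int_{\partial B_t(a)} |\nabla_T u|^2 \, dS \le t \int_{\partial B_t(a)} |\nabla u|^2 \, dS.
\]
Since $\A(a)\A(a)^T = I$ and $\A\A^T$ is $L$-Lipschitz, $|\A(x)\A(x)^T - I| \le Lt$ on $B_t(a)$, so pointwise $|\nabla v_t \A|^2 \le (1 + Lt)|\nabla v_t|^2$ and $|\nabla u|^2 \le (1 - Lt)^{-1} |\nabla u \A|^2$. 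Writing $E(t) := \int_{B_t(a)} |\nabla u \A|^2$, applying (A1) with $U = B_t(a)$, and chaining these estimates gives
\[
E(t) \le \int_{B_t(a)} |\nabla v_t \A|^2 + C \eta^2 t^{1+2\alpha} \le t(1 + c_0 t)\, E'(t) + C \eta^2 t^{1+2\alpha}
\]
for some $c_0 = c_0(L)$ and $t$ small, where $E'(t) = \int_{\partial B_t(a)} |\nabla u \A|^2 \, dS$ for a.e.\ $t$.

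Dividing by $t^2$ and using $(1 + c_0 t)^{-1} \ge 1 - c_0 t$ yields
\[
\frac{d}{dt}\!\left(\frac{E(t)}{t}\right) \ge -c_0 \frac{E(t)}{t} - C \eta^2 t^{2\alpha - 1}.
\]
Multiplying by the integrating factor $e^{c_0 t}$ turns the left-hand side into $\frac{d}{dt}[e^{c_0 t} E(t)/t]$ and, after bounding $e^{c_0 t} \le e^{c_0 d(\Omega)}$ and using $\eta^2 t^{2\alpha-1} = (2\alpha)^{-1}\frac{d}{dt}(\eta^2 t^{2\alpha})$, I obtain
\[
\frac{d}{dt}\!\left[ e^{c_0 t} \frac{E(t)}{t} + C' \eta^2 t^{2\alpha} \right] \ge 0
\]
with $C' = C'(\alpha, L, d(\Omega))$, which is the claimed monotonicity after renaming constants.

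The main obstacle is the parallel bookkeeping of two genuinely different perturbations of the classical computation. The $\A$-induced error must be \emph{multiplicative and small}, which is only true because the normalization $\A(a)\A(a)^T = I$ makes it $O(t)$ rather than $O(1)$; this is exactly what is absorbable into an exponential integrating factor, and without this normalization the differential inequality would not close. The almost-minimizer deficit, in contrast, is additive, and its power $t^{1+2\alpha}$ has to match the derivative of $t^{2\alpha}$ precisely so that it can be lumped into the additive correction term. Losing either structural feature would break the argument.
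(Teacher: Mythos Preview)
Your argument follows essentially the same route as the paper: radial extension as comparison map, the almost-minimality inequality from (A1), Lipschitz control of $\A\A^T$ near the normalized point $a$, and an exponential integrating factor to absorb the multiplicative error. The paper's bookkeeping is slightly more refined---it tracks $\A$ along radii via $\B(x,r)=\A\big(\tfrac{r}{t}(x-a)+a\big)$ and expands $|\nabla u(I-\nu\otimes\nu)\B|^2$ directly---whereas you pass through the Euclidean norm with the two-sided bounds $(1\pm Lt)$; both reach the same differential inequality.

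One small point: your bound $|\nabla u|^2 \le (1-Lt)^{-1}|\nabla u\A|^2$ forces $Lt<1$, which is why you had to write ``for $t$ small''. The lemma as stated is for all $t$ with $B_t(a)\subset\Omega$, and the paper handles this by using the uniform bound $|\nabla u|^2 \le M^2|\nabla u\A|^2$ (from (A1)(iii)) rather than the Lipschitz bound at that step; this is why the constant $c$ is allowed to depend on $M$. Replacing your $(1-Lt)^{-1}$ by $M^2$ on the boundary integral removes the restriction and gives the result for all admissible $t$.
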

\begin{proof}
  As in the case of minimizing harmonic maps into spheres, we define the comparison function
  \begin{align*}
    u_t(x) = 
    \begin{cases}
      u(x) & \text{if }x \in \Omega \setminus \overline{B_t(a)} 
      \\
      u(a+t \frac{x-a}{\abs{x-a}}) & \text{if }x \in B_t(a)
    \end{cases} 
  \end{align*}
  for $x \in \Omega$ and $t>0$. In particular, 
  we have  $u_t = u$ on $\Omega \setminus 
  \overline{B_t(a)}$ and $u_t \in \hos$. Moreover, we find for the derivative  $\nabla u_t$ the identity
  \begin{align*}
    \nabla u_t(x) = \nabla u \left (a+t \frac{x-a}{\abs{x-a}}\right) \left(\frac{t}
    {\abs{x-a}} I - t \frac{(x-a) \otimes (x-a)}{\abs{x-a}^3}\right)
  \end{align*}
  for every $x \in B_t(a)$. Thus
  \begin{align*}
     \int_{B_t(a)} \abs{\nabla u_t \A}^2 
    = & \int_0^t \int_{\partial B_t(a)} \biggabs{\nabla u(x) (I - \nu(x) \otimes \nu(x) ) \A\left( \frac{r}{t} (x-a) +a \right)}^2 \, do(x) \, dr \, , 
  \end{align*}
  where $\nu(x) = (x-a)/\abs{x-a}$ is the unit outer normal of $\partial B_t(a)$ at $x \in \partial B_t(a)$. In the sequel we use the abbreviation
    $\B= \B(x,r) = \A\left( \frac{r}{t} (x-a) +a \right)$
  for $x \in \partial B_t(a)$ and $0 \le r \le t$. Furthermore, we use the identity $A:CD = AD^T:C$
  for all $A,B,C \in \setR^{3\times3}$ to find
  \begin{align*}
   \abs{\nabla u (I- \nu \otimes \nu) \B}^2 
 =& \nabla u \B \B^T : \nabla u - 2 \nabla u \B \B^T : \nabla u \nu \otimes \nu + \nabla u \nu \otimes \nu \B \B^T : \nabla u \nu \otimes \nu \,.
  \end{align*}
  By introducing $\A\A^T= \A(x)\A(x)^T$ and $I$, we obtain the following decomposition:
  \begin{align*}
   \abs{\nabla u (I- \nu \otimes \nu) \B}^2 
  =& \abs{\nabla u \A}^2 + \nabla u (\B\B^T - \A\A^T) : \nabla u - 2 \nabla u (\B\B^T -I): \nabla u \nu \otimes \nu 
  \\
  &- 2 \nabla u : \nabla u \nu \otimes \nu + \nabla u \nu \otimes \nu (\B\B^T - I) : \nabla u \nu \otimes \nu + \abs{\nabla u \nu \otimes \nu}^2 \,.
  \end{align*}
  We have $M=\sup_{x \in \Omega} \abs{\A(x)^{-1}}<\infty$,
  $\abs{\B\B^T - I} \le L \, r \,$ , and
  $\abs{\B\B^T - \A\A^T} \le L \, (t-r)$
  on $\partial B_t(a)$, thanks to the properties of $\A$.
  Moreover, a simple calculation shows that
  \begin{align*}
   \abs{\nabla u \nu \otimes \nu}^2 - 2 \nabla u : \nabla u \nu \otimes \nu \le 0 \,.
  \end{align*}
  A combination of these facts yields the following estimate:
  \begin{align*}
   \abs{\nabla u (I- \nu \otimes \nu) \B}^2  &\le \abs{\nabla u \A}^2 
   + L t\abs{\nabla u}^2  + 2Lr \abs{\nabla u}^2 \le \abs{\nabla u \A}^2 + L M^2 t\abs{\nabla u \A}^2  + 2 L M^2 r\abs{\nabla u \A}^2  \,.
  \end{align*}
  In particular, we obtain
  \begin{align*}
    & \int_{B_t(a)} \abs{\nabla u_t \A}^2 
   \le ( 1 + 2 \, L \, M^2 \, t ) \, t
   \int_{\partial B_t(a)} \abs{ \nabla u \A}^2 \, do(x)\,.
  \end{align*}
  Since $u_t$ is an admissible comparison function, we obtain from (A1) the inequality
  \begin{align*}
    \int_{B_t(a)} \abs{\nabla u \A}^2 \le ( 1+ 2 \, L \, M^2 \, t ) \, t \int_{\partial B_t(a)} \abs{\nabla u \A}^2 \, do(x)+ C \, \eta^2 \, t^{1+2 \alpha} \, ,
  \end{align*}
  where $C=C(\alpha)>0$. Rearrangement leads to
  \begin{align*}
    0 \le& ( 1+ 2 \, L \, M^2 \, t ) \bigg( t \, \int_{\partial B_t(a)} \abs{\nabla u \A}^2 \, do(x)- \int_{B_t(a)} \abs{\nabla u \A}^2
    \\
    &\qquad \qquad \qquad \qquad+ \frac{ 2 \, L \, M^2 t}{ 1 + 2 \, L \, M^2 \, t}  \int_{B_t(a)} \abs{\nabla u \A}^2 + \frac{C}
     { 1+ 2 \, L \, M^2 \, t } \, \eta^2 \, t^{1 + 2 \alpha} \bigg) \,.
  \end{align*}
  In particular, we have that
  \begin{align*}
    0 \le & \, t \, \int_{\partial B_t(a)} \abs{\nabla u \A}^2 \, do(x)- \int_{B_t(a)} \abs{\nabla u \A}^2 
    + 2 \, L \, M^2 \, t 
    \int_{B_t(a)} \abs{\nabla u \A}^2 + C \eta^2 \, t^{1 + 2 \alpha} \,.
  \end{align*}
  We now introduce the constant $c = 2 \, L \, M^2$ and multiply by $t^{-2} \, e^{c \, t}$ in order to obtain the following estimate:
  \begin{align*}
    0 \le \, \frac{\text{d}}{\text{dt}} \bigg( e^{c \, t} \frac{1}{t} \int_{B_t(a)} \abs{\nabla u \A}^2 
      + C e^{c \, d(\Omega )} \, \eta^2 \, t^{2 \alpha} \bigg) \,.
  \end{align*}
  The lemma is proved.
\end{proof}
With the help of an affine coordinate transformation, we obtain the monotonicity formula for arbitrary points. Since the proof is rather straightforward, we omit the proof here.
\begin{lem}
 \label{thm:monotonicity}
 Suppose $u$ and $\A$ satisfy (A1). Then for every $a \in \Omega$ there is an invertible affine transformation $T_a: \setR^3 \to \setR^3$ such that
 \begin{align*}
    t \mapsto e^{c \, t} \, \frac{1}{t} \int_{T_a(B_t(a))} \abs{\nabla u \A}^2 + C \, \eta^2 \, t^{2 \alpha}
  \end{align*}
 is monotone increasing for every $\alpha \in (0,1)$, where the positive constants $c$ and $C$ depend on $\alpha$, but are independent of the point $a$. Furthermore, $T_a$ satisfies $T_a(a)=a$,
 \begin{gather*}
  c \le \abs{\nabla T_a}, \, \abs{(\nabla T_a)^{-1}} \le C , \quad \text{and} \quad
  c \le \abs{\det \nabla T_a} \le C \, ,
 \end{gather*}
 where the positive constants $c$ and $C$ are independent of the point $a$.
\end{lem}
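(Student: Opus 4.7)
The plan is to reduce to Lemma \ref{lem:monotonicity} by an affine change of coordinates that normalizes $\A(\cdot)\A(\cdot)^T$ to the identity at the point $a$. Since by (A1)(ii) the matrix $\A(a)\A(a)^T$ is symmetric and positive definite with eigenvalues in $[\beta^{-1},\beta]$, I set $P_a := (\A(a)\A(a)^T)^{1/2}$ (the symmetric positive square root) and $T_a(y) := a + P_a(y-a)$. Then $T_a$ is an invertible affine map fixing $a$, $\nabla T_a = P_a$ is constant in $y$, and the bounds
\begin{align*}
 \beta^{-1/2} \le |P_a|,\, |P_a^{-1}| \le \beta^{1/2}, \qquad \beta^{-3/2} \le |\det P_a| \le \beta^{3/2},
\end{align*}
hold uniformly in $a$, yielding the three properties of $T_a$ asserted in the lemma.

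Next I would transfer (A1) to the pulled-back data $\tilde u(y) := u(T_a(y))$ on $\tilde\Omega := T_a^{-1}(\Omega)$ and $\tilde\A(y) := P_a^{-1}\A(T_a(y))$. The chain rule gives $\nabla \tilde u(y)\,\tilde\A(y) = \nabla u(T_a(y))\,\A(T_a(y))$, and $\tilde\A(a)\tilde\A(a)^T = P_a^{-1}\A(a)\A(a)^T P_a^{-T} = I$ by construction. Conditions (A1)(i)--(iii) for $\tilde\A$ follow by combining the corresponding properties of $\A$ with the two-sided bounds on $P_a$ above. To verify the almost-minimizer inequality for $\tilde u$, given $\tilde v \in H^1(\tilde\Omega,S^2)$ with $\tilde v = \tilde u$ outside $U\subset\subset\tilde\Omega$, I would set $v := \tilde v \circ T_a^{-1} \in H^1(\Omega,S^2)$, apply (A1) to $u, v$ on the set $T_a(U) \subset\subset \Omega$, use $d(T_a(U)) \le \beta^{1/2} d(U)$, and divide by $|\det P_a|$; the resulting constant depends on $a$ only through $|P_a|$ and $|\det P_a|$, both controlled by $\beta$.

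With (A1) verified for $(\tilde u,\tilde\A)$ and $\tilde\A(a)\tilde\A(a)^T = I$, Lemma \ref{lem:monotonicity} applies at the point $a \in \tilde\Omega$ and yields the monotonicity of
\begin{align*}
 t \mapsto e^{c t}\,\frac{1}{t}\int_{B_t(a)} |\nabla \tilde u\,\tilde\A|^2 + C\,\eta^2\,t^{2\alpha}
\end{align*}
for $t \le \mathrm{dist}(a,\partial\tilde\Omega)$, with $c,C$ depending only on $\alpha$ and the typical parameters. Multiplying by $|\det P_a|$ and using the change-of-variables identity
\begin{align*}
 |\det P_a|\int_{B_t(a)} |\nabla \tilde u\,\tilde\A|^2\,dy = \int_{T_a(B_t(a))} |\nabla u\,\A|^2\,dx
\end{align*}
converts the above into the monotonicity asserted in Lemma \ref{thm:monotonicity}, and the new constants remain uniform in $a$ because $|\det P_a|$ is bounded from both sides by $\beta$. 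The only step requiring real care is the transfer of (A1) to $(\tilde u,\tilde\A)$: every constant produced must be traceable to $\beta$ (and the other typical parameters) rather than to the specific point $a$. This bookkeeping is routine because all $a$-dependence enters only through the eigenvalues of $P_a$, which are uniformly pinched between $\beta^{-1/2}$ and $\beta^{1/2}$.
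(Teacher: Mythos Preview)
Your proposal is correct and is precisely the affine coordinate transformation the paper has in mind; the paper omits the proof as ``rather straightforward,'' and your argument fills in exactly this routine verification that the pulled-back pair $(\tilde u,\tilde\A)$ with $\tilde\A(y)=P_a^{-1}\A(T_a(y))$ satisfies (A1) with constants depending only on $\beta$ and the other typical parameters, so that Lemma~\ref{lem:monotonicity} applies at $a$ uniformly. The one cosmetic point to tidy is that after multiplying by $|\det P_a|$ the additive term becomes $C\,|\det P_a|\,\eta^2 t^{2\alpha}$, which still depends on $a$; replacing it by the larger $C\,\beta^{3/2}\,\eta^2 t^{2\alpha}$ preserves monotonicity and makes the constant genuinely uniform in $a$.
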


\section{The reverse \Poincare{} inequality}
\label{sec:reversePoincare}
In this section we apply the monotonicity formula in order to prove the reverse \Poincare{} inequality, which marks an important step towards regularity. Before we do so, we need an additional tool that can be used to construct suitable comparison maps. More precisely, we utilize a general lemma by Luckhaus (see for example \cite{Luckhaus1} or \cite{Luckhaus2}) in the version stated below. But first of all, we introduce the abbreviations
\begin{align*}
 \ui_B  = \dashint\nolimits_{\hspace{-0.16cm}B} u = \frac{1}{\abs{B}} \int_B u
\end{align*}
for the average of a given function $u$ on a given set $B$ whenever this is well-defined. If $B=B_\rho(y)$ is a ball, we simply write $\ui_{y,\rho} = \ui_{B_\rho(y)}$. In the sequel we also make use of the inequality
\begin{align}
 \label{eq:mean value}
 \int_B \abs{u - \ui_B}^2 \le \int_B \abs{u-\lambda}^2 \, ,
\end{align}
where $u \in L^1(B,\setR^3)$ and $\lambda \in \setR^3$.
\begin{lem}[Luckhaus]
 \label{lem:Luckhaus}
 For every $\Lambda>0$ there exist positive constants $\delta_0=\delta_0(\Lambda)$ and $C=C(\Lambda)$ such that the following holds: For every $\epsilon \in (0,1)$ and every $u \in H^1(B_\rho(y),S^2)$ with 
\begin{align*}
 \rho^{-1} \int_{B_\rho(y)} \abs{\nabla u}^2 \le \Lambda \qquad \text{and} \qquad
 \epsilon^{-6} \rho^{-3} \int_{B_\rho(y)} \abs{u - \ui_{y,\rho}}^2 \le \delta_0^2
\end{align*}
 there is a $\sigma \in (3/4 \,\rho,\rho)$ and a function $w \in H^1(B_\rho(y),S^2)$ such that $w = u$ on \nolinebreak $\partial B_\sigma(y)$ and
\begin{align*}
 \sigma^{-1} \int_{B_\sigma(y)} \abs{\nabla w}^2 \le \epsilon \rho^{-1} \int_{B_\rho(y)} \abs{\nabla u}^2 + C\epsilon^{-1}  \rho^{-3} \int_{B_\rho(y)} \abs{u - \ui_{y,\rho}}^2 \,.
\end{align*}
\end{lem}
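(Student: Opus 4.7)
The plan is to establish Luckhaus's lemma via the standard three-step slicing-and-projection construction, adapted to the present setting of maps from $\setR^3$ into $S^2$. The three ingredients are (i) the selection of a good spherical slice $\partial B_\sigma(y)$ on which the trace of $u$ has controlled Dirichlet energy and controlled $L^2$-oscillation, (ii) the construction of a comparison map $\tilde w:B_\sigma(y)\to\setR^3$ by radial interpolation to a constant in a thin annular shell together with a constant extension to the inner ball, and (iii) projection of $\tilde w$ back onto the target $S^2$ to produce the desired $w$. Outside $B_\sigma(y)$ the map is extended by $u$ itself so that $w\in H^1(B_\rho(y),S^2)$.

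For step (i), I would use Fubini in the radial variable together with an averaging argument. From
\[
 \int_{3\rho/4}^{\rho} \int_{\partial B_t(y)} \bigl(\abs{\nabla u}^2 + t^{-2}\abs{u - \ui_{y,\rho}}^2\bigr) \, do \, dt \le \int_{B_\rho(y)} \bigl(\abs{\nabla u}^2 + \rho^{-2}\abs{u - \ui_{y,\rho}}^2\bigr),
\]
one extracts a radius $\sigma\in(3\rho/4,\rho)$ such that the two spherical integrals are bounded, up to a universal constant, by $\rho^{-1}$ times their bulk counterparts. By the two hypotheses of the lemma, these quantities are $\Lambda$-bounded and $\delta_0^{\,2}\epsilon^{6}$-small, respectively.

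For step (ii), choose a constant $a\in S^2$, namely the projection of $\ui_{y,\rho}$ onto $S^2$, which is well defined once $\delta_0$ is small enough to force $\abs{\ui_{y,\rho}}$ away from $0$. On the annulus $A_\epsilon = B_\sigma(y)\setminus B_{(1-\epsilon)\sigma}(y)$ define
\[
 \tilde w(y+r\theta) = \frac{r-(1-\epsilon)\sigma}{\epsilon\sigma}\, u(y+\sigma\theta) + \frac{\sigma-r}{\epsilon\sigma}\, a, \qquad \theta\in S^2,
\]
set $\tilde w\equiv a$ in $B_{(1-\epsilon)\sigma}(y)$, and $\tilde w = u$ on $B_\rho(y)\setminus B_\sigma(y)$. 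Splitting the gradient into tangential and radial parts and integrating in spherical coordinates gives the energy estimate
\[
 \int_{A_\epsilon} \abs{\nabla\tilde w}^2 \le C\, \epsilon\sigma \int_{\partial B_\sigma(y)} \abs{\nabla u}^2 + C\, \epsilon^{-1}\sigma^{-1} \int_{\partial B_\sigma(y)} \abs{u - a}^2,
\]
which, combined with the slice bounds from step (i), yields the right-hand side claimed in the lemma.

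The main obstacle is step (iii): producing the projected map $w = \tilde w/\abs{\tilde w}$ while keeping its Dirichlet energy comparable to that of $\tilde w$. A pointwise large oscillation of $u$ on a small portion of $\partial B_\sigma(y)$ can drag the straight-line interpolation through $0$, ruining the naive projection even when $\norm{u-a}_{L^2}$ is small. The standard remedy is a two-scale refinement: cover $\partial B_\sigma(y)$ by a geodesic mesh of scale $\epsilon\sigma$ with $O(\epsilon^{-2})$ cells, approximate $u|_{\partial B_\sigma(y)}$ by a piecewise affine map $u^{(\epsilon)}$ on this mesh, and perform the radial interpolation with $u^{(\epsilon)}$ in place of $u$. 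On cells where the local oscillation of $u$ is small, the image of $\tilde w$ lies in a tubular neighborhood of $S^2$ and Lipschitz projection is available; on the ``bad'' cells, whose total surface area is controlled by $C\epsilon^{-4}\sigma^{-2}\int_{\partial B_\sigma(y)}\abs{u-a}^2$ via Chebyshev, the map is replaced by a short geodesic in $S^2$ connecting the trace to $a$, with energy cost controlled by the bounded Dirichlet energy on that cell. Tracking the $\epsilon$-scalings, the bad-cell surface bound and the interpolation width $\epsilon\sigma$ combine to produce exactly the exponent $\epsilon^{-6}\rho^{-3}$ appearing in the smallness hypothesis, and choosing $\delta_0=\delta_0(\Lambda)$ small enough ensures that both the projection and cap corrections contribute to the energy at the same order as the main interpolation estimate, closing the proof.
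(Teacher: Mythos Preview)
The paper does not actually prove this lemma: its entire proof reads ``The version at hand is taken from \cite[Section 2.7, Corollary 1]{simon}.'' So there is no argument in the paper to compare against; the author is simply quoting a known result from Simon's book.

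Your sketch is essentially the standard proof one finds in Simon (and in the original Luckhaus papers), so in that sense you are reproducing what the cited reference does rather than offering a different route. The three-step outline --- Fubini selection of a good radius, linear radial interpolation to a constant $a\in S^2$ across a shell of width $\epsilon\sigma$, projection back to $S^2$ --- is correct, and your energy computation in step~(ii) is accurate. One caveat: your description of step~(iii) is heuristic. The genuine technical difficulty in Luckhaus's argument is not handled by a Chebyshev bound on the $L^2$ oscillation over two-dimensional cells (which does not control pointwise distance to $a$, since $H^1$ fails to embed into $L^\infty$ in dimension two). The actual mechanism in Simon's proof works on the one-skeleton of the $\epsilon\sigma$-mesh, where $H^1$ traces are continuous, and builds the extension cell by cell via homogeneous-degree-zero extension from the boundary of each small cube; the $\epsilon^{-6}$ in the hypothesis arises from keeping the image of the piecewise-affine interpolant inside a tubular neighborhood of $S^2$ on this skeleton. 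Your exponent bookkeeping in the last paragraph is therefore not quite the right computation, though the conclusion you reach is correct. If you intend to include a self-contained proof rather than a citation, that portion should be tightened; otherwise, citing Simon as the paper does is entirely adequate.
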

\begin{proof}
 The version at hand is taken from \cite[Section 2.7, Corollary 1]{simon}.
\end{proof}
Under the assumptions of Lemma \ref{thm:monotonicity}, there exist constants $0<c_1<c_2$ such that
\begin{align}
 \label{eq:inclusion}
 B_{c_1 r}(a) \subset T_a(B_r(a)) \subset B_{c_2 r} (a)
\end{align}
for every $a \in \Omega$ and every $r>0$. Furthermore, we define $c_0 = c_1/c_2 \in (0,1)$.
We can now prove the following version of the reverse \Poincare{} inequality:
\begin{lem}
 \label{thm:poincare}
 Suppose $u$ and $\A$ satisfy (A1). Then for every $\alpha \in (0,1)$ and $\Lambda > 0$ there exists a constant $C>0$ depending only on $\alpha$, $\Lambda$, and the typical parameters with the following property: If $B_{2r}(x_0) \subset \Omega$ is a ball such that $r^{-1} \int_{B_{2r}(x_0)} \abs{\nabla u \A}^2 \le \Lambda$, then we have the estimate
 \begin{align*}
    \frac{1}{r} \int_{B_{c_0 \, \frac{r}{2}}(x_0)} \abs{\nabla u}^2 \le C \, \bigg( \frac{1}{r^3} \int_{B_r(x_0)} \abs{u- \ui_{x_0,r}}^2 
  + \eta^2 \, r^{2 \alpha} \bigg) \,.
  \end{align*}
\end{lem}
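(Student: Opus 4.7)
The plan is to follow Simon's scheme for the reverse \Poincare{} inequality for minimizing harmonic maps into spheres \cite{simon}, adapted to the almost-minimizer setting of (A1) and combined with the monotonicity formula of Lemma \ref{thm:monotonicity} to accommodate the affine change of coordinates encoded by $\A$. Three ingredients will be combined: Luckhaus' Lemma \ref{lem:Luckhaus} to produce a comparison map on a favorable intermediate sphere, the almost-minimality (A1) to transfer the resulting energy estimate back to $u$, and Lemma \ref{thm:monotonicity} to bridge between Euclidean balls and the affine-transformed balls $T_{x_0}(B_t(x_0))$ (through $B_{c_1 t} \subset T_{x_0}(B_t) \subset B_{c_2 t}$), which is the source of the factor $c_0 = c_1/c_2$ in the conclusion.

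First I would fix $\epsilon \in (0,1)$ (to be chosen later in terms of $\alpha$, $\Lambda$, and the typical parameters) and distinguish two cases according to the Luckhaus smallness condition at scale $r$. If $\epsilon^{-6} r^{-3}\int_{B_r(x_0)}|u-\ui_{x_0,r}|^2 > \delta_0^2$, then the hypothesis together with $|\nabla u|^2 \le \beta |\nabla u \A|^2$ yields the trivial bound $r^{-1}\int_{B_{c_0 r/2}(x_0)}|\nabla u|^2 \le \beta \Lambda$, which is then dominated by $\beta\Lambda(\delta_0 \epsilon^3)^{-2}\,r^{-3}\int_{B_r(x_0)}|u-\ui_{x_0,r}|^2$ and hence by the RHS of the conclusion.

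In the complementary case I would apply Luckhaus' lemma at $\rho = r$ (with energy bound $\Lambda' = \beta\Lambda$ for the gradient hypothesis) to produce a radius $\sigma \in (3r/4, r)$ and a comparison $w \in \hbs$ with $w = u$ on $\partial B_\sigma(x_0)$, and then plug $v=w$ (extended by $u$ outside $B_\sigma$) into (A1) with $U = B_\sigma(x_0)$, giving $\int_{B_\sigma}|\nabla u \A|^2 \le \int_{B_\sigma}|\nabla w \A|^2 + C\eta^2 r^{1+2\alpha}$. Combining this with $|\nabla w \A|^2 \le N^2 |\nabla w|^2$, the Luckhaus gradient estimate, $|\nabla u|^2 \le \beta |\nabla u \A|^2$, and $\sigma \le r$, and dividing by $r$, produces
\[
r^{-1}\int_{B_\sigma}|\nabla u \A|^2 \le C\epsilon\, r^{-1}\int_{B_r}|\nabla u \A|^2 + C\epsilon^{-1} r^{-3}\int_{B_r}|u-\ui_{x_0,r}|^2 + C\eta^2 r^{2\alpha}.
\]

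To conclude I would pass from $\int_{B_\sigma}$ on the left to the target $\int_{B_{c_0 r/2}}$ via the inclusions $B_{c_0 r/2}(x_0) \subset T_{x_0}(B_{r/(2c_2)}(x_0))$ and $T_{x_0}(B_{\sigma/c_2}(x_0)) \subset B_\sigma(x_0)$, applying Lemma \ref{thm:monotonicity} to $t_1 = r/(2c_2)$ and $t_2 = \sigma/c_2$ to obtain a bound of the form $r^{-1}\int_{B_{c_0 r/2}}|\nabla u \A|^2 \le C\sigma^{-1}\int_{B_\sigma}|\nabla u \A|^2 + C\eta^2 r^{2\alpha}$; this is where the factor $c_0$ enters. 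The hard part will be eliminating the residual term $C\epsilon\,r^{-1}\int_{B_r}|\nabla u \A|^2$: because it lives on a strictly larger ball than the LHS, no direct absorption is possible, and one must instead bound it by $C\epsilon\Lambda$ via the hypothesis and then choose $\epsilon$ so small (depending on the constants from Lemma \ref{thm:monotonicity} and on $\Lambda$) that the contribution is absorbed into the $\Lambda$-dependent constant $C$ permitted by the conclusion, or, alternatively, iterate the derived inequality at a controlled sequence of shrinking scales (where Luckhaus' smallness can still be enforced through the case-(b) assumption and the monotonicity formula).
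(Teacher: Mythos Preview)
Your outline has the right ingredients (Luckhaus, (A1), monotonicity from Lemma~\ref{thm:monotonicity}), and the two-case split is also how the paper proceeds. The gap is in the final absorption step, which you flag as ``the hard part'' but do not actually resolve.

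Option (a) does not work. Bounding the residual by $C\epsilon\Lambda$ leaves an \emph{additive} constant on the right-hand side, whereas the conclusion is $C\big(r^{-3}\int_{B_r(x_0)}|u-\ui_{x_0,r}|^2 + \eta^2 r^{2\alpha}\big)$ with both summands possibly arbitrarily small. No choice of $\epsilon$, and no $\Lambda$-dependence of the multiplicative constant $C$, lets you hide a fixed additive $C\epsilon\Lambda$ inside $C$ times a quantity that might be $10^{-10}$. Option (b) as stated is not a viable escape either: iterating your inequality at scales $r,\theta r,\theta^2 r,\dots$ requires the Luckhaus smallness hypothesis at every step, which you do not control, and it generates oscillation terms at all the intermediate scales that you would still need to dominate by the single oscillation at scale $r$.

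The paper resolves this differently. It first uses Lemma~\ref{thm:monotonicity} to show that $s^{-1}\int_{B_s(y_0)}|\nabla u|^2 \le \tilde\Lambda$ uniformly for \emph{every} ball $B_s(y_0)\subset B_{c_0 r}(x_0)$, and then derives the two-case $\epsilon$-inequality (essentially your display) for every such sub-ball, not just the one centred at $x_0$. Absorption is then carried out by a covering argument of Simon type: set
\[
\mathcal{Q}=\sup\Bigset{\sigma^2\!\int_{B_\sigma}|\nabla u|^2 \,:\, B_{2\sigma}\subset B_{c_0 r}(x_0)},
\]
cover each $B_\sigma(x)\in\mathcal{A}$ by $N$ balls $B_{c_0\sigma/4}(x_i)$ with $N$ universal, apply the $\epsilon$-inequality on each $B_{\sigma/2}(x_i)$, and obtain $\mathcal{Q}\le NC\epsilon\,\mathcal{Q}+C(\epsilon)\int_{B_r(x_0)}|u-\ui_{x_0,r}|^2+C\eta^2 r^{3+2\alpha}$. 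Now the $\epsilon$-term \emph{is} on the same scale as the left-hand side and can be absorbed by choosing $\epsilon=1/(2NC)$. This supremum-and-cover device is the missing idea in your proposal.
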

\begin{proof}
  Let $\alpha \in (0,1)$, $\Lambda>0$ be given and let $B_{2r}(x_0) \subset \Omega$ be such that $r^{-1} \int_{B_{2r}(x_0)} \abs{\nabla u \A}^2 \le \Lambda$. For an arbitrary ball $B_s(y_0) \subset B_{c_0 r}(x_0)$, we can estimate as follows
  \begin{align*}
    \frac{1}{s} \int_{B_s(y_0)} \abs{\nabla u}^2 
    \le  \frac{\beta}{s} \int_{T_{y_0}\big(B_\frac{s}{c_1}(y_0)\big)}  \abs{\nabla u \A}^2
    \le \frac{\beta}{c_1} \bigg( \,e^{c \, \frac{s}{c_1}} \Big( \frac{s}{c_1} \Big)^{-1} 
    \int_{T_{y_0}\big(B_\frac{s}{c_1}(y_0)\big)}  \abs{\nabla u \A}^2 + C \, \eta^2 \, \Big( \frac{s}{c_1} \Big)^{2 \alpha} \,\bigg) \,,
  \end{align*}
  where we have used (A1) and \eqref{eq:inclusion}, and $T_{y_0},c,C$ are taken from  Lemma \ref{thm:monotonicity}. 
 Since $s/c_1\le r/c_2$ and the inclusion $B_r(y_0) \subset B_{2r}(x_0)$ holds, we obtain with the help of the monotonicity formula the estimate
  \begin{align}
  \label{eq:reverse Poincare 1}
    \frac{1}{s} \int_{B_s(y_0)} \abs{\nabla u}^2
    \le  \frac{C}{r} \int_{B_{2r}(x_0)} \abs{\nabla u \A}^2 + C \, \eta^2 \, r^{2 \alpha}
    \le& \tilde{\Lambda} \,,
  \end{align}
  where $\tilde{\Lambda}$ is independent of $x_0$, $y_0$, $r$, and $s$.
  \\
  Claim: For every $\epsilon > 0$ there is a $\delta_0 = \delta_0(\epsilon, \tilde{\Lambda})>0$ with the following property: If $B_s(y_0) \subset B_{c_0r}(x_0)$ is a ball such that
  \begin{align*}
    \frac{1}{s^3} \int_{B_s(y_0)} \abs{u- \ui_{y_0,s}}^2 \le \delta_0^2 \, ,
  \end{align*}
  then we have the estimate
  \begin{align*}
    \frac{1}{s} \int_{B_{c_0 \frac{s}{2}} (y_0)} \abs{\nabla u}^2 \le \frac{C \epsilon}{s} \int_{B_s(y_0)} \abs{\nabla u}^2 + 
       \frac{C}{\epsilon \, s^3} \int_{B_s(y_0)} \abs{u - \ui_{y_0,s}}^2 + C \, \eta^2 \, s^{2 \alpha} \, ,
  \end{align*}
  where $C=C(\alpha, \tilde{\Lambda})$ is a constant.
  \\
  Proof of Claim:
  Let $\epsilon >0$ be given and let $\delta_1 = \delta_1 (\tilde{\Lambda})$ be the constant from the Luckhaus lemma  with respect to $\tilde{\Lambda}$ (see Lemma \ref{lem:Luckhaus}). We now define \nolinebreak  $\delta_0$ by 
  $\delta_0^2 = \epsilon^6 \, \delta_1^2$ and obtain for an arbitrary ball $B_s(y_0) \subset B_{c_0r}(x_0)$ with $s^{-3} \int_{B_s(y_0)} \abs{u- \ui_{y_0,s}}^2 \le \delta_0^2$ the estimate
  \begin{align*}
    \epsilon^{-6} \, s^{-3} \int_{B_s(y_0)} \abs{u- \ui_{y_0,s}}^2 \le \epsilon^{-6} \, \delta_0^2 = \delta_1^2 \,.
  \end{align*}
  This combined with the Luckhaus lemma and \eqref{eq:reverse Poincare 1} implies the existence of a $\sigma \in \big(3/4 \,s,s\big)$ and a function $w \in H^1(B_s(y_0),S^2)$ with $w=u$ on $\partial B_\sigma (y_0)$ such that
  \begin{align}
   \label{eq:reverse Poincare 2}
    \frac{1}{\sigma} \int_{B_{\sigma}(y_0)} \abs{\nabla w}^2 \le \frac{\epsilon}{s} \int_{B_s(y_0)} \abs{\nabla u}^2
    + \frac{C}{\epsilon \, s^3} \int_{B_s(y_0)} \abs{u- \ui_{y_0,s}}^2 \, ,
  \end{align}
  where $C=C(\tilde{\Lambda})$ is a constant. Moreover, we obtain by (A1) and \eqref{eq:inclusion} the estimate
  \begin{align*}
     \frac{1}{s} \hspace{-0,1cm}\int_{B_{c_0 \frac{s}{2}} (y_0)} \hspace{-0,1cm}\abs{\nabla u}^2
    \le  C  \bigg( e^{c \frac{s}{2  c_2}} \Big(\frac{s}{2  c_2}\Big)^{-1} \hspace{-0,2cm}
    \int_{T_{y_0}\big(B_{\frac{s}{2  c_2}}(y_0)\big)} \hspace{-0,1cm}\abs{\nabla u \A}^2  + C \eta^2\Big(\frac{s}{2  c_2}\Big)^{2 \alpha} 
    \bigg) .
  \end{align*}
  With the help of the monotonicity formula (see Lemma \ref{thm:monotonicity}), \eqref{eq:inclusion}, and the fact that $s/2 \le \sigma$, we get the estimate
  \begin{align*}
     \frac{1}{s} \hspace{-0,1cm}\int_{B_{c_0 \frac{s}{2}} (y_0)} \abs{\nabla u}^2
    \le  \frac{C}{\sigma} \int_{B_{\sigma}(y_0)} \hspace{-0,1cm}\abs{\nabla u \A}^2 + C  \eta^2  \sigma^{2 \alpha} \,.
  \end{align*}
 In view of (A1), we define the comparison map $v \in H^1(\Omega,S^2)$ by
 \begin{align*}
  v(x) =
  \begin{cases}
   u(x) & \qquad \text{if } x \in \Omega\setminus B_\sigma(y_0)
   \\
   w(x) & \qquad  \text{if } x \in B_\sigma(y_0)
  \end{cases}
 \end{align*}
 and obtain together with \eqref{eq:reverse Poincare 2} that
  \begin{align*}
     \frac{1}{s} \int_{B_{c_0 \frac{s}{2}} (y_0)} \abs{\nabla u}^2
    \le  \frac{C \epsilon}{s} \int_{B_s(y_0)} \abs{\nabla u}^2 + \frac{C}{\epsilon \,s^3} 
    \int_{B_s(y_0)} \abs{u - \ui_{y_0,s}}^2 + C \eta^2 s^{2 \alpha} \, .
  \end{align*}
  The claim is proved.
  \\
  If now $B_s(y_0) \subset B_{c_0r}(x_0)$ is a ball such that
  \begin{align*}
    \frac{1}{s^3} \int_{B_s(y_0)} \abs{u- \ui_{y_0,s}}^2 > \delta_0^2 \, ,
  \end{align*}
  then we obtain together with \eqref{eq:reverse Poincare 1} the estimate
  \begin{align*}
    \frac{1}{s^3} \int_{B_s(y_0)} \abs{u-\ui_{y_0,s}}^2 \ge \delta_0^2 = \delta_0^2 \frac{\tilde{\Lambda}}{\tilde{\Lambda}} \ge 
    \frac{\delta_0^2}{\tilde{\Lambda}} \frac{1}{s} \int_{B_{c_0 \frac{s}{2}}(y_0)} \abs{\nabla u}^2 \,.
  \end{align*}
  A combination of the above estimates shows that
  \begin{align}
   \label{eq:reverse Poincare 3} \hspace{-0,1cm}
    \frac{1}{s} \hspace{-0,1cm}\int_{B_{c_0 \frac{s}{2}}(y_0)} \hspace{-0,1cm}\abs{\nabla u}^2 \le  \frac{C\epsilon}{s} \hspace{-0,1cm}\int_{B_s(y_0)} \hspace{-0,1cm}\abs{\nabla u}^2
    + \frac{C(\epsilon)}{s^3} \hspace{-0,1cm}\int_{B_s(y_0)} \hspace{-0,1cm}\abs{u- \ui_{y_0,s}}^2 + C\eta^2s^{2 \alpha}
  \end{align}
  for all balls $B_s(y_0) \subset B_{c_0 r}(x_0)$. We still have to choose $\epsilon >0$ and do this in the following way:
First of all, we define
  \begin{align*}
    \mathcal{A}= \set{B_\sigma \, | \, B_{2  \sigma} \subset B_{c_0 r}(x_0)} \quad \text{and} \quad \mathcal{Q} = 
    \sup_{B_\sigma \in \mathcal{A}} \sigma^2 \int_{B_\sigma} \abs{\nabla u}^2 \, .
  \end{align*}
  Let now $B_\sigma(x) \in \mathcal{A}$ be given. There are $N$ points $x_1, \dots, x_N \in B_{\sigma}(x)$ such that
  \begin{align*}
    B_\sigma(x) \subset \bigcup_{i=1}^N B_{c_0 \frac{\sigma}{4}}(x_i) \, ,
  \end{align*}
  where $N$ is independent of the special choice of $B_\sigma(x) \in \mathcal{A}$. We remark that 
  $B_\sigma(x_i) \subset B_{2 \sigma} (x) \subset B_{c_0 r}(x_0)$ for every $i \in \set{1,\dots,N}$ and obtain together with \eqref{eq:mean value} and \eqref{eq:reverse Poincare 3} that
  \begin{align*}\hspace{-0,1cm}
    \sigma^2 \hspace{-0,2cm}\int_{B_\sigma(x)} \hspace{-0,1cm}\abs{\nabla u}^2
    \le & \sigma^2 \sum_{i=1}^N \int_{B_{c_0 \frac{\sigma}{4}}(x_i)} \hspace{-0,1cm}\abs{\nabla u}^2
    \\
    \le & \sum_{i=1}^N \hspace{-0,1cm}\bigg( \hspace{-0,1cm}C \epsilon \sigma^2 \hspace{-0,2cm}\int_{B_{\frac{\sigma}{2}}(x_i)} \hspace{-0,1cm}\abs{\nabla u}^2 + C(\epsilon)
    \hspace{-0,2cm}\int_{B_\frac{\sigma}{2}(x_i)} \hspace{-0,1cm}\abs{u- \ui_{x_i,\frac{\sigma}{2}}}^2 + C \eta^2 \sigma^{3+2 \alpha} \hspace{-0,1cm}\bigg)
    \\
    \le & C  \epsilon \mathcal{Q} + C(\epsilon) \hspace{-0,2cm}\int_{B_r(x_0)}\hspace{-0,1cm} \abs{u-\ui_{x_0,r}}^2 + C  \eta^2  r^{3+2\alpha} \,.
  \end{align*}
  Since $B_\sigma(x) \in \mathcal{A}$ was arbitrary, we conclude
  \begin{align*}
    \mathcal{Q} \le C  \epsilon  \mathcal{Q} + C(\epsilon) \int_{B_r(x_0)} \abs{u-\ui_{x_0,r}}^2 
    + C  \eta^2  r^{3+2\alpha} \, .
  \end{align*}
  Now we choose $\epsilon = 1/(2 C)$ in order to find
  \begin{align*}
    \mathcal{Q} \le C \bigg( \int_{B_r(x_0)} \abs{u-\ui_{x_0,r}}^2  +  \eta^2 r^{3+2\alpha} \bigg) \,.
  \end{align*}
  The lemma is proved since $B_{c_0 \frac{r}{2}}(x_0) \in \mathcal{A}$.
\end{proof}
 With the help of the monotonicity formula, we easily obtain the following corollary:
\begin{corollary}
  \label{cor:poincare}
  Suppose $u$ and $\A$ satisfy assumption (A1). Then for every $\alpha \in (0,1)$ and $\Lambda >0$ 
  there is a constant $C>0$ depending only on $\alpha$, $\Lambda$, and the typical parameters with the following property:
  If $B_{3r}(x_0) \subset \Omega$ is a ball such that $\frac{1}{r} \int_{B_{3r}(x_0)} \abs{\nabla u \A}^2 \le \Lambda$, then
  \begin{align*}
    \frac{1}{s} \int_{B_{c_0 \frac{s}{2}}(y)} \abs{\nabla u}^2 \le C \, \bigg(\frac{1}{s^3} \int_{B_s(y)} \abs{u- \ui_{y,s}}^2 + \eta^2  
    s^{2 \alpha} \bigg)
  \end{align*}
  for all $y \in B_r(x_0)$ and $0< s \le c_0 r$.
\end{corollary}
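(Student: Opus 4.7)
The strategy is to reduce the corollary to the already established Lemma \ref{thm:poincare} applied at the center $y$ with radius $s$. The only thing that is not immediate from the hypotheses is that the mean value of $\abs{\nabla u \A}^2$ on $B_{2s}(y)$ is controlled in terms of $\Lambda$ and the typical parameters, and this is exactly where the monotonicity formula (Lemma \ref{thm:monotonicity}) enters. The main (and essentially only) subtlety will be bookkeeping between ordinary balls $B_t(y)$ and the distorted balls $T_y(B_t(y))$ that appear in the monotonicity formula; the inclusions \eqref{eq:inclusion} and the constant $c_0 = c_1/c_2$ are tailored for exactly this purpose.

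First I would fix $y \in B_r(x_0)$ and $0<s\le c_0 r$ and observe that since $c_0<1$ one has $B_{2s}(y) \subset B_{3r}(x_0) \subset \Omega$ by the triangle inequality, so the subsequent estimates make sense. Next I would set $t_1 = 2s/c_1$ and $t_2 = 2r/c_2$; from $s \le c_0 r = (c_1/c_2) r$ we have $t_1 \le t_2$, and \eqref{eq:inclusion} gives
\begin{align*}
B_{2s}(y) = B_{c_1 t_1}(y) \subset T_y(B_{t_1}(y)) \quad \text{and} \quad T_y(B_{t_2}(y)) \subset B_{c_2 t_2}(y) = B_{2r}(y) \subset B_{3r}(x_0).
\end{align*}

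Then I would chain the inclusions with the monotonicity formula applied at the point $y$: rearranging the monotonicity statement between scales $t_1$ and $t_2$ gives
\begin{align*}
\frac{1}{s}\int_{B_{2s}(y)} \abs{\nabla u \A}^2
\le \frac{2}{c_1}\, \frac{1}{t_1}\int_{T_y(B_{t_1}(y))} \abs{\nabla u \A}^2
\le C\left( \frac{1}{t_2} \int_{T_y(B_{t_2}(y))} \abs{\nabla u \A}^2 + \eta^2 \, t_2^{2\alpha}\right),
\end{align*}
where $C$ absorbs $2/c_1$ and the monotonicity factor $e^{c t_2}$, the latter bounded in terms of $d(\Omega)$. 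Using the second inclusion and the hypothesis $r^{-1}\int_{B_{3r}(x_0)} \abs{\nabla u \A}^2 \le \Lambda$, this yields
\begin{align*}
\frac{1}{s}\int_{B_{2s}(y)} \abs{\nabla u \A}^2 \le C\left( \Lambda + \eta^2 \, d(\Omega)^{2\alpha}\right) =: \tilde{\Lambda},
\end{align*}
with $\tilde{\Lambda}$ depending only on $\alpha$, $\Lambda$, and the typical parameters.

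Finally I would apply Lemma \ref{thm:poincare} with center $y$ and radius $s$ (which is legitimate because $B_{2s}(y) \subset \Omega$ and we just verified the required upper bound with constant $\tilde{\Lambda}$). This yields exactly the claimed inequality
\begin{align*}
\frac{1}{s}\int_{B_{c_0 s/2}(y)} \abs{\nabla u}^2 \le C\left(\frac{1}{s^3}\int_{B_s(y)} \abs{u - \ui_{y,s}}^2 + \eta^2 \, s^{2\alpha}\right),
\end{align*}
with $C = C(\alpha,\tilde{\Lambda},\text{typical parameters}) = C(\alpha,\Lambda,\text{typical parameters})$, which completes the proof. No serious obstacle is expected; the only thing to watch is that $t_1 \le t_2$ precisely when $s \le c_0 r$, which is why the restriction $s \le c_0 r$ appears in the statement.
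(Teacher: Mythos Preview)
Your argument is correct and is exactly the approach the paper has in mind: it omits the proof entirely, saying only ``With the help of the monotonicity formula, we easily obtain the following corollary,'' and the chain of inclusions via \eqref{eq:inclusion} together with the monotonicity of Lemma~\ref{thm:monotonicity} to transfer the scale-$r$ energy bound down to scale $2s$, followed by an application of Lemma~\ref{thm:poincare} at $(y,s)$, is precisely what is intended (and mirrors the computation leading to \eqref{eq:reverse Poincare 1} in the proof of Lemma~\ref{thm:poincare}). One cosmetic point: your $\tilde{\Lambda}$ contains the term $\eta^2 d(\Omega)^{2\alpha}$, so strictly speaking it depends on $\eta$ as well; this is the same harmless dependence already present in the paper's own $\tilde{\Lambda}$ in the proof of Lemma~\ref{thm:poincare}, and disappears once one notes (as the paper later does) that w.l.o.g.\ $\eta\le 1$.
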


\section{H{\"o}lder regularity}
\label{sec:Hoelder regularity}
In this section we prove the H{\"o}lder regularity for functions $u$ which satisfy assumptions (A1) and (A2). For this we make use of the well-known Campanato lemma (see \cite{giaquinta} or \cite{simon} for a proof):
\begin{lem}[Campanato]
\label{lem:Campanato}
 Let $v \in L^2(B_{2R}(x_0))$ with $B_{2R}(x_0) \subset \setR^3$, $\alpha \in (0,1)$, and $\beta>0$ be such that
 \begin{align*}
  \rho^{-3} \int_{B_\rho(y)} \abs{v - \overline{v}_{y,\rho}}^2 \le \beta^2 \Big(\frac{\rho}{R}\Big)^{2 \alpha}
 \end{align*}
 for all $y \in B_R(x_0)$ and all $0<\rho\le R$. Then we have $v \in C^{0,\alpha}(B_{R}(x_0))$ and
 \begin{align*}
  \abs{v(x)-v(y)} \le C \beta \bigg(\frac{\abs{x-y}}{R}\bigg)^{\alpha}
 \end{align*}
 for all $x,y \in B_R(x_0)$, where $C$ depends only on $\alpha$.
\end{lem}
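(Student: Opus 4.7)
The plan is to follow the classical Campanato argument: construct a pointwise representative $\tilde{v}$ of $v$ as the limit of the averages $\overline{v}_{y,\rho}$ as $\rho\downarrow 0$, and then control $|\tilde{v}(x)-\tilde{v}(y)|$ by a telescoping comparison of averages on balls of radius $\rho\sim|x-y|$.

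The key ingredient is a dyadic Cauchy estimate for the means at a fixed center. For $y\in B_R(x_0)$ and $0<\rho\le R$, Jensen's inequality together with the hypothesis gives
\[
 |\overline{v}_{y,\rho/2}-\overline{v}_{y,\rho}|^2 \;\le\; \dashint\nolimits_{B_{\rho/2}(y)} |v-\overline{v}_{y,\rho}|^2 \;\le\; \frac{8}{|B_\rho|}\int_{B_\rho(y)} |v-\overline{v}_{y,\rho}|^2 \;\le\; C\beta^2 (\rho/R)^{2\alpha}.
\]
Iterating on the dyadic radii $\rho_k=2^{-k}\rho$ and summing the geometric series in $2^{-k\alpha}$ shows that $\{\overline{v}_{y,\rho_k}\}_k$ is Cauchy in $\setR^3$, with the uniform telescoping bound $|\overline{v}_{y,\rho_k}-\overline{v}_{y,\rho}|\le C\beta(\rho/R)^\alpha$. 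Let $\tilde{v}(y)$ denote the limit; by the Lebesgue differentiation theorem $\tilde{v}=v$ almost everywhere, and passing $k\to\infty$ yields
\[
 |\tilde{v}(y)-\overline{v}_{y,\rho}| \;\le\; C\beta\,(\rho/R)^\alpha, \qquad y\in B_R(x_0),\ 0<\rho\le R.
\]

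For two points $x,y\in B_R(x_0)$ with $|x-y|\le R/2$, set $\rho=2|x-y|\le R$; then $B_\rho(x),B_\rho(y)\subset B_{2R}(x_0)$, so the hypothesis applies at both centers, and $W:=B_\rho(x)\cap B_\rho(y)\supset B_{\rho/2}\bigl((x+y)/2\bigr)$ satisfies $|W|\ge c\rho^3$. Splitting on $W$,
\[
 |\overline{v}_{x,\rho}-\overline{v}_{y,\rho}|^2 \;\le\; \frac{2}{|W|}\int_{W}\bigl(|v-\overline{v}_{x,\rho}|^2+|v-\overline{v}_{y,\rho}|^2\bigr) \;\le\; C\beta^2(|x-y|/R)^{2\alpha}.
\]
Combining this with the preceding bound at both $x$ and $y$ yields $|\tilde{v}(x)-\tilde{v}(y)|\le C\beta(|x-y|/R)^\alpha$. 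The complementary regime $|x-y|>R/2$ is handled by a finite chain of intermediate points in $B_R(x_0)$ along the segment from $x$ to $y$ (three suffice, since $|x-y|<2R$ and $B_R(x_0)$ is convex), each consecutive pair at distance at most $R/2$, which only costs an additional absolute factor.

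The only genuine subtlety is the passage from an a.e.-defined $L^2$-function to a truly continuous representative: one has to verify that the everywhere-defined limit $\tilde{v}$ of the dyadic averages exists, coincides with $v$ almost everywhere, and inherits the Hölder estimate. All remaining steps reduce to Jensen's inequality, the given Campanato-type bound, and elementary volume estimates for intersections of balls, so there is no analytic obstacle beyond this identification.
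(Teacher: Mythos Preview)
Your argument is correct and follows the classical Campanato route (dyadic telescoping of means to define a continuous representative, then comparison of means at nearby centers via a common sub-ball). The paper itself does not supply a proof of this lemma but simply cites the standard references \cite{giaquinta} and \cite{simon}; the argument you wrote is essentially the one found there, so there is nothing to compare.
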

In order to obtain the necessary decay estimate, we apply in the following the technique of harmonic approximation. This technique goes back to Simon and was, for example, used to simplify the original proof of the small-energy-regularity theorem for minimizing harmonic maps (see \cite{simon}). The idea is to compare functions which are \textquotedblleft{}approximately harmonic\textquotedblright{} with harmonic functions. A generalization to elliptic bilinear forms is given in the paper by Duzaar and Grotowski \cite{duzaargrotowski} as stated in the special version below. For a bilinear mapping 
$\mathcal{A}:\setR^{3\times3} \times \setR^{3\times3} \to \setR$, we call a function $w \in H^1(B_\rho(x_0),\setR^3)$ with $B_\rho(x_0) \subset \setR^3$ \textquotedblleft{}$\mathcal{A}$-harmonic\textquotedblright{} if it satisfies
\begin{align*}
 \int_{B_\rho(x_0)} \mathcal{A}(\nabla w, \nabla \varphi) = 0
\end{align*}
for all $\varphi \in C^1_0(B_\rho(x_0),\setR^3)$.
\begin{lem}
 \label{lem:A-harmonic approximation}
 Consider a fixed positive $\beta>0$. Then given $\epsilon>0$ there exists a constant $C=C(\beta,\epsilon)>0$ with the following property: For every bilinear mapping $\mathcal{A}:\setR^{3\times3} \times \setR^{3\times3} \to \setR$ satisfying
 \begin{align}
 \label{eq:assumption A-harmonic}
  \mathcal{A}(B_1,B_1) \ge \frac{1}{\beta} \abs{B_1}^2 \, , \quad \abs{\mathcal{A}(B_1,B_2)}\le \beta \, \abs{B_1}\, \abs{B_2} \, , \quad B_1,B_2 \in \setR^{3\times3} \,,
 \end{align}
 for every $B_\rho(x_0) \subset \setR^3$, and every function $v \in H^1(B_\rho(x_0),\setR^3)$, there exists a $\mathcal{A}$-harmonic function $w \in  H^1(B_\rho(x_0),\setR^3)$ such that
\begin{align*}
 \int_{B_\rho(x_0)} \abs{\nabla w}^2 \le \int_{B_\rho(x_0)} \abs{\nabla v}^2
\end{align*}
and
\begin{align*}
 \bigg(\rho^{-3} \int_{B_\rho(x_0)} \abs{w - v}^2\bigg)^{\frac{1}{2}}
\le& C \sup \biggset{ \rho^{-1} \int_{B_\rho(x_0)} \mathcal{A}(\nabla v, \nabla \varphi) \, \bigg| \, \varphi \in C_0^\infty(B_\rho(x_0),\setR^3), \, \norm{\nabla \varphi}_{L^\infty} \le \rho^{-1}}
\\ 
&+ \epsilon \bigg( \rho^{-1} \int_{B_\rho(x_0)} \abs{\nabla v}^2 \bigg)^{\frac{1}{2}} \, .
\end{align*}
\end{lem}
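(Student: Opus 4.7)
The plan is a compactness/contradiction argument in the style of Simon's harmonic approximation lemma, extended to variable bilinear forms as in Duzaar--Grotowski. First I reduce to the normalized situation $\rho=1$, $x_0=0$: under $\tilde v(y)=v(x_0+\rho y)$, every quantity in the assertion scales homogeneously, so no generality is lost. Fix $\epsilon>0$ and suppose the conclusion fails. Then there exist bilinear forms $\mathcal A_k$ satisfying \eqref{eq:assumption A-harmonic} with the common constant $\beta$, and $v_k\in H^1(B_1,\setR^3)$, such that for every $\mathcal A_k$-harmonic $w$ with $\int_{B_1}|\nabla w|^2\le\int_{B_1}|\nabla v_k|^2$ the inequality fails with $C$ replaced by $k$. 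Subtracting the mean and rescaling $v_k$ (both sides scale the same way), I may assume $\int_{B_1}v_k=0$ and $\int_{B_1}|\nabla v_k|^2=1$; the failing inequality then reads $\|w-v_k\|_{L^2(B_1)}>k\sigma_k+\epsilon$, where $\sigma_k:=\sup\{\int_{B_1}\mathcal A_k(\nabla v_k,\nabla\varphi):\varphi\in C_0^\infty(B_1,\setR^3),\ \|\nabla\varphi\|_{L^\infty}\le 1\}$.

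I next show $\sigma_k\to 0$. Taking the trivial comparison $w\equiv 0$ (which is $\mathcal A_k$-harmonic and satisfies the gradient bound), Poincar\'e gives $\|v_k\|_{L^2}\le C$, so if $\sigma_k$ did not tend to zero then $k\sigma_k\to\infty$ would already produce a contradiction. Passing to a subsequence, Rellich yields $v_k\rightharpoonup v_\infty$ weakly in $H^1(B_1)$ and strongly in $L^2(B_1)$; the coefficient tensors of $\mathcal A_k$ live in a compact subset of $\setR^{81}$, so up to a further subsequence $\mathcal A_k\to\mathcal A_\infty$ with $\mathcal A_\infty$ still satisfying \eqref{eq:assumption A-harmonic} for the same $\beta$. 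The limit $v_\infty$ is $\mathcal A_\infty$-harmonic: for every $\varphi\in C_0^\infty(B_1,\setR^3)$,
\[
\int_{B_1}\mathcal A_\infty(\nabla v_\infty,\nabla\varphi)=\lim_{k\to\infty}\int_{B_1}\mathcal A_k(\nabla v_k,\nabla\varphi),
\]
and the right-hand side is bounded in modulus by $\sigma_k\|\nabla\varphi\|_{L^\infty}\to 0$.

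To conclude, I produce a competitor $w_k$. Let $\hat w_k$ be the $\mathcal A_k$-harmonic extension of $v_\infty\big|_{\partial B_1}$, furnished by Lax--Milgram on $v_\infty+H_0^1(B_1,\setR^3)$ with respect to $\mathcal A_k$. The standard convergence result for elliptic problems with converging coefficients and fixed boundary data shows $\hat w_k\to v_\infty$ strongly in $H^1(B_1)$; in particular $\int_{B_1}|\nabla\hat w_k|^2\to\int_{B_1}|\nabla v_\infty|^2\le 1$ by weak lower semicontinuity. Setting $w_k:=(1-\delta_k)\hat w_k$ with $\delta_k:=\max\{0,\,1-(\int|\nabla\hat w_k|^2)^{-1/2}\}\to 0$ preserves $\mathcal A_k$-harmonicity (scalar multiples of $\mathcal A_k$-harmonic functions are again $\mathcal A_k$-harmonic by bilinearity) and enforces $\int|\nabla w_k|^2\le 1=\int|\nabla v_k|^2$, so $w_k$ is admissible. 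Since $v_k\to v_\infty$ and $w_k\to v_\infty$ in $L^2(B_1)$, $\|w_k-v_k\|_{L^2}\to 0$, which contradicts $\|w_k-v_k\|_{L^2}>k\sigma_k+\epsilon\ge\epsilon$ for large $k$. The main technical point is preserving the sharp energy bound $\int|\nabla w|^2\le\int|\nabla v|^2$ under the limit procedure; the elementary scaling $w_k=(1-\delta_k)\hat w_k$ is the device that handles this cleanly, since without it the natural $\mathcal A_k$-harmonic extension only obeys $\int|\nabla\hat w_k|^2\le \beta^2\int|\nabla v_k|^2$.
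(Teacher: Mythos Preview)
Your argument is correct. The paper does not actually prove this lemma: its proof consists solely of a citation to \cite{duzaargrotowski}, Lemma~A.1, together with the remark that this is Giaquinta's reformulation of the compactness-based Lemma~2.1 in the same reference. What you have written is precisely that compactness argument, carried out in full for the quantitative version stated here.

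A couple of minor remarks. First, your formula $\delta_k=\max\{0,1-(\int|\nabla\hat w_k|^2)^{-1/2}\}$ is undefined when $\hat w_k$ is constant; in that case one of course sets $\delta_k=0$, and the argument goes through unchanged. Second, the device $w_k=(1-\delta_k)\hat w_k$ is exactly the right idea to force the \emph{sharp} constraint $\int|\nabla w_k|^2\le\int|\nabla v_k|^2$ rather than the crude bound $\int|\nabla\hat w_k|^2\le\beta^2$; this is the only place where the present formulation differs from the more common ``$\delta$--$\epsilon$'' version of the $\mathcal{A}$-harmonic approximation lemma, and you handle it cleanly.
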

\begin{proof}
 This is a special version of \cite[Lemma A.1]{duzaargrotowski}. We remark that the mentioned Lemma A.1 in \cite{duzaargrotowski} is -- according to the authors -- a reformulation of Lemma 2.1 in the same paper due to M. Giaquinta.
\end{proof}
We also recall the Campanato estimate for $\mathcal{A}$-harmonic functions (see \cite[III, Theorem 2.1]{giaquinta}):
\begin{lem}[Campanato estimate]
 \label{lem:Campanato estimates}
 Let $\mathcal{A}:\setR^{3 \times 3} \times \setR^{3 \times3} \to \setR$ be a bilinear mapping satisfying \nolinebreak \eqref{eq:assumption A-harmonic}. Then there is a constant $C>0$ depending only on $\beta$ such that 
\begin{align*}
 \int_{B_\rho(x_0)} \abs{w - \overline{w}_{x_0,\rho}}^2 \le C \Big(\frac{\rho}{R}\Big)^5 \int_{B_R(x_0)} \abs{w -  \overline{w}_{x_0,R}}^2
\end{align*}
for every $\mathcal{A}$-harmonic function $w$ on $B_R(x_0) \subset \setR^3$ and $0<\rho<R$.
\end{lem}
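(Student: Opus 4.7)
The plan is to reduce the Campanato decay to an $L^2$-$L^\infty$ bound on $\nabla w$, which is available because the bilinear form $\mathcal{A}$ has constant coefficients. Since translations preserve the identity $\int \mathcal{A}(\nabla w, \nabla \varphi) = 0$, a standard difference-quotient argument shows that every partial derivative of $w$ is itself $\mathcal{A}$-harmonic on slightly smaller concentric balls, and therefore $w$ is smooth inside $B_R(x_0)$.

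First I would establish a Caccioppoli inequality by testing against $\varphi = \zeta^2 (w - \lambda)$, where $\lambda \in \setR^3$ is arbitrary and $\zeta \in C_0^\infty(B_R(x_0))$ is a standard cutoff with $\zeta \equiv 1$ on $B_r(x_0)$ and $\norm{\nabla \zeta}_{L^\infty} \le C(R-r)^{-1}$. Using the coercivity bound in \eqref{eq:assumption A-harmonic} on the main term and Young's inequality together with the continuity bound on the cross term yields
\begin{align*}
 \int_{B_r(x_0)} \abs{\nabla w}^2 \le \frac{C}{(R-r)^2} \int_{B_R(x_0)} \abs{w - \lambda}^2,
\end{align*}
with $C$ depending only on $\beta$.

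Next I would combine this with the fact that each $\partial_i w$ is again $\mathcal{A}$-harmonic in order to iterate Caccioppoli on a sequence of shrinking concentric balls, thereby controlling $\int \abs{\nabla^k w}^2$ for every $k$. In dimension three the Sobolev embedding $H^2 \hookrightarrow C^0$ together with a scaling argument then yields the mean-value-type bound
\begin{align*}
 \sup_{B_{R/2}(x_0)} \abs{\nabla w}^2 \le \frac{C}{R^3} \int_{B_R(x_0)} \abs{\nabla w}^2.
\end{align*}
Applying this to $\nabla w$ on the pair $B_{R/4}(x_0) \subset B_{R/2}(x_0)$ and then using the Caccioppoli inequality for $w$ itself with $\lambda = \overline{w}_{x_0,R}$ gives
\begin{align*}
 \sup_{B_{R/4}(x_0)} \abs{\nabla w}^2 \le \frac{C}{R^5} \int_{B_R(x_0)} \abs{w - \overline{w}_{x_0,R}}^2.
\end{align*}

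Finally, for $0 < \rho \le R/4$ the mean value theorem gives $\abs{w(x) - w(x_0)} \le \rho \sup_{B_\rho(x_0)} \abs{\nabla w}$; squaring, integrating over $B_\rho(x_0)$, and applying \eqref{eq:mean value} with $\lambda = w(x_0)$ produces
\begin{align*}
 \int_{B_\rho(x_0)} \abs{w - \overline{w}_{x_0,\rho}}^2 \le C \rho^{5} \sup_{B_{R/4}(x_0)} \abs{\nabla w}^2 \le C \Big(\frac{\rho}{R}\Big)^5 \int_{B_R(x_0)} \abs{w - \overline{w}_{x_0,R}}^2,
\end{align*}
while the range $R/4 < \rho < R$ is handled by enlarging $C$. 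The main obstacle is the $L^2$-$L^\infty$ bound for $\nabla w$; its proof hinges on the constancy of the coefficients of $\mathcal{A}$ (so that derivatives of solutions are themselves solutions), after which it becomes a standard consequence of iterated Caccioppoli and Sobolev embedding in three dimensions.
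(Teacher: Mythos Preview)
Your argument is correct and is essentially the standard textbook proof: Caccioppoli for constant-coefficient systems, iteration via difference quotients to pass to higher derivatives, the resulting $L^\infty$--$L^2$ bound on $\nabla w$, and then the elementary $\rho^5$ decay from the mean value theorem, with the range $\rho > R/4$ handled trivially. The paper itself does not give a proof of this lemma at all; it simply quotes \cite[III, Theorem~2.1]{giaquinta}, whose argument is precisely the one you have outlined.
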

We are now prepared to prove the following lemma in the spirit of \cite{simon}.
\begin{lem}
  \label{thm:hoeldercontinuity}
 Suppose $u$ and $\A$ satisfy assumptions (A1) and (A2). Then for all $\Lambda>0$ and $\alpha \in (0,1)$ there exist positive constants $\delta_0,R_0$, and $C$ depending only on $\alpha$, $\Lambda$, and the typical parameters with the following property: If $B_{3r}(x_0) \subset \Omega$ is a ball with $0<r\le R_0$ such that
\begin{align*}
 \frac{1}{r^3} \int_{B_{2r}(x_0)} \abs{u - \ui_{x_0,2r}}^2 \le \delta_0^2 \qquad \text{and} \qquad
 \frac{1}{r} \int_{B_{3r}(x_0)} \abs{\nabla u \A}^2 \le \Lambda \, ,
\end{align*}
 then $u \in C^{0,\alpha}(B_{c_0 r}(x_0), \setR^3)$ and
  \begin{align*}
    \abs{u(x) - u(y)} \le C \max \biggset{\frac{1}{r^3} \int_{B_{2r}(x_0)} \abs{u- \ui_{x_0,2r}}^2 , \eta^2
      r^{2\alpha} }^{\frac{1}{2}} \bigg(\frac{\abs{x-y}}{c_0 r}\bigg)^{\alpha}
  \end{align*}
 for all $x,y \in B_{c_0r}(x_0)$.
\end{lem}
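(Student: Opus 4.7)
The plan is to establish a Campanato-type decay estimate of the form
\[
 \rho^{-3} \int_{B_\rho(y)} |u - \overline{u}_{y,\rho}|^2 \le C \Bigl(\frac{\rho}{c_0 r}\Bigr)^{2\alpha} M^2,
\]
valid for every $y \in B_{c_0 r}(x_0)$ and every $0 < \rho \le c_0 r$, where $M^2$ is the maximum appearing on the right-hand side of the claimed inequality. Once this decay is in hand, Lemma \ref{lem:Campanato} applied on $B_{c_0 r}(x_0)$ yields the stated Hölder estimate. The decay itself follows from a one-step estimate that is iterated, combining the harmonic approximation lemma (Lemma \ref{lem:A-harmonic approximation}), the Campanato decay for harmonic functions (Lemma \ref{lem:Campanato estimates}), and the reverse Poincaré inequality (Corollary \ref{cor:poincare}).

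For the one-step estimate, fix $y \in B_{c_0 r}(x_0)$ and a ball $B_\rho(y)$ with $\rho$ small relative to $c_0 r$, and introduce the frozen bilinear form $\mathcal{A}_y(B_1, B_2) := B_1 \A(y) \A(y)^T : B_2$, which satisfies \eqref{eq:assumption A-harmonic} with the constant $\beta$ from (A1). Given a test function $\varphi \in C_0^\infty(B_\rho(y), \setR^3)$ with $\|\nabla \varphi\|_{L^\infty} \le \rho^{-1}$ (so $\|\varphi\|_{L^\infty} \le 2$), I would use (A2) to decompose
\[
 \int_{B_\rho(y)} \mathcal{A}_y(\nabla u, \nabla \varphi) = \int_{B_\rho(y)} \nabla u \bigl(\A(y)\A(y)^T - \A \A^T\bigr) : \nabla \varphi + \int_{B_\rho(y)} |\nabla u \A|^2 u \cdot \varphi + \eta^2 \int_{B_\rho(y)} f \cdot \varphi.
\]
The freezing error is of order $L \rho^{1/2} \|\nabla u\|_{L^2(B_\rho)}$ by (A1)(i); the quadratic term is bounded by $2 \int_{B_\rho} |\nabla u \A|^2$; and the $f$-term by $C \eta^2 \rho^{3/p'} \|f\|_{L^p}$ for a convenient $p > 1$. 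After dividing by $\rho$, Corollary \ref{cor:poincare} replaces $\rho^{-1} \int_{B_\rho} |\nabla u|^2$ by $C \Phi(y, 2\rho/c_0) + C \eta^2 \rho^{2\alpha}$, where $\Phi(y, s) := s^{-3} \int_{B_s(y)} |u - \overline{u}_{y,s}|^2$. Under the smallness hypotheses on $\delta_0$ and $R_0$, the supremum appearing in Lemma \ref{lem:A-harmonic approximation} is then a small multiple of $\Phi(y, c \rho)^{1/2} + \eta \rho^\alpha$ for a universal $c > 1$.

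Lemma \ref{lem:A-harmonic approximation} now produces an $\mathcal{A}_y$-harmonic $w$ on $B_\rho(y)$ with $\rho^{-3} \int_{B_\rho} |u - w|^2$ bounded by the squared defect plus $\epsilon^2 \rho^{-1} \int_{B_\rho} |\nabla u|^2$ for an $\epsilon$ to be tuned later. Lemma \ref{lem:Campanato estimates} gives $\int_{B_{\theta \rho}} |w - \overline{w}_{y, \theta \rho}|^2 \le C \theta^5 \int_{B_\rho} |w - \overline{w}_{y, \rho}|^2$ for every $\theta \in (0, 1/2)$. Combining these via the triangle inequality and the minimality property \eqref{eq:mean value} yields a one-step decay of the form
\[
 \Phi(y, \theta \rho) \le C \theta^2 \, \Phi(y, \rho) + C(\theta) \bigl[\Phi(y, c \rho) + \eta^2 \rho^{2\alpha}\bigr] \tau,
\]
where $\tau > 0$ can be made arbitrarily small by shrinking $\delta_0, R_0$, and $\epsilon$. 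Choosing $\theta$ first so that $C \theta^2 \le \tfrac12 \theta^{2\alpha}$, then $\tau$ small enough to absorb the remaining error into $\tfrac12 \theta^{2\alpha}(\Phi(y, \rho) + \eta^2 \rho^{2\alpha})$, produces the recursion $\Phi(y, \theta \rho) + \eta^2 (\theta \rho)^{2\alpha} \le \theta^{2\alpha} \bigl(\Phi(y, \rho) + \eta^2 \rho^{2\alpha}\bigr)$, which iterates dyadically to the Campanato bound, and Lemma \ref{lem:Campanato} finishes the proof.

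The main obstacle is the quadratic nonlinearity $|\nabla u \A|^2 u \cdot \varphi$ appearing in (A2): it is not a classical lower-order perturbation and cannot be tamed by linearization alone. Its smallness depends crucially on the small-energy hypothesis together with the reverse Poincaré inequality, which is precisely why the monotonicity formula of Section \ref{sec:monotonicity} and Corollary \ref{cor:poincare} must already be in hand. Balancing the three error sources (the freezing error from the Lipschitz continuity of $\A \A^T$, the quadratic nonlinearity controlled by reverse Poincaré, and the $\eta^2 f$ term) so that every step of the iteration actually contracts is the delicate quantitative point, and this is what forces the simultaneous smallness of $\delta_0$ and $R_0$.
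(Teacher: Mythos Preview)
Your proposal is correct and follows essentially the same route as the paper: freeze the coefficients to form $\mathcal{A}_y$, use (A2) to split the defect into the Lipschitz freezing error, the quadratic nonlinearity (controlled via Corollary~\ref{cor:poincare}), and the $\eta^2 f$ term, then feed the resulting bound into Lemma~\ref{lem:A-harmonic approximation} and Lemma~\ref{lem:Campanato estimates} to obtain a one-step decay that is iterated and finished off by Lemma~\ref{lem:Campanato}. The only cosmetic differences are that the paper carries out the harmonic approximation on the smaller ball $B_{c_0\rho/2}(y)$ (so that Corollary~\ref{cor:poincare} lands back on $B_\rho(y)$ rather than on a larger ball $B_{c\rho}(y)$) and organizes the iteration with $\max\{\Phi,\eta^2\rho^{2\alpha}\}$ instead of the sum, which makes the recursion propagate without the extra bookkeeping your formulation would need.
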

\begin{proof}
 Throughout the proof, $\Lambda>0$ and $\alpha \in (0,1)$ are fixed, and $B_{3r}(x_0)$ is a ball contained in $\Omega$ such that $\frac{1}{r} \int_{B_{3r}(x_0)} \abs{\nabla u \A}^2 \le \Lambda$. Furthermore, let $y \in B_r(x_0)$ and $0< \rho \le c_0 r$ be arbitrary, but fixed. In view of Lemma \ref{lem:A-harmonic approximation}, we consider a test function $\varphi \in C_0^\infty(B_{c_0 \frac{\rho}{2}}(y), \setR^3)$ such that 
$\norm{\nabla \varphi}_{L^\infty} \le 1 /(c_0 \rho / 2)$. This in particular implies that $\norm{\varphi}_{L^\infty} \le 2$.
By (A2) and the standard freezing coefficients device, we obtain
 \begin{align*}
     \int_{B_{c_0 \frac{\rho}{2}}(y)} \hspace{-0,1cm}\nabla u\A(y)\A(y)^T : \nabla \varphi
    =&\int_{B_{c_0 \frac{\rho}{2}}(y)} \hspace{-0,1cm}\nabla u \big(\A(y)\A(y)^T - \A\A^T \big): \nabla \varphi 
    +\int_{B_{c_0 \frac{\rho}{2}}(y)} \hspace{-0,1cm}\abs{\nabla u \A}^2 u \cdot \varphi 
    \\
    &+ \eta^2 \int_{B_{c_0 \frac{\rho}{2}}(y)} \hspace{-0,1cm}f \cdot \varphi 
   \\
   =&I_1+I_2+I_3 \,.
  \end{align*}
  The Lipschitz continuity of $\A\A^T$ and the H{\"o}lder inequality imply that
 \begin{align*}
  \abs{I_1} \le L \,c_0 \,\frac{\rho}{2} \,\norm{\nabla \varphi}_{L^\infty} \int_{B_{c_0 \frac{\rho}{2}}(y)} \abs{\nabla u} \le C \Big(c_0 \frac{\rho}{2} \Big)^{\frac{3}{2}} \bigg(\int_{B_{c_0 \frac{\rho}{2}}(y)} \abs{\nabla u}^2 \bigg)^{\frac{1}{2}}\,.
 \end{align*}
 Since $N=\sup_{x\in\Omega}\abs{\A(x)} < \infty$, we obtain for $I_2$ the estimate
 \begin{align*}
  \abs{I_2} \le \norm{\varphi}_{L^\infty} \int_{ B_{c_0 \frac{\rho}{2}}(y)} \abs{\nabla u \A }^2 \le 2 N^2 \int_{ B_{c_0 \frac{\rho}{2}}(y)} \abs{\nabla u}^2 \,.  
 \end{align*}
 Regarding $I_3$ we apply the H{\"o}lder inequality with $p = 3 /(2\alpha+1)$ in order to find
\begin{align*}
 \abs{I_3} \le \eta^2 \norm{\varphi}_{L^\infty} \int_{B_{c_0 \frac{\rho}{2}}(y)} \abs{f} \le 2 \eta^2 \abs{B_{c_0 \frac{\rho}{2}}(y)}^{\frac{2\alpha+1}{3}} \norm{f}_{L^{p'}} \le C \eta^2 \Big(c_0 \frac{\rho}{2} \Big)^{2\alpha+1} \,.
\end{align*}
A combination of the above estimates together with the reverse \Poincare{} inequality (see Corollary \ref{cor:poincare}) implies
\begin{align*}
 &\Big(c_0 \frac{\rho}{2}\Big)^{-1}\biggabs{\int_{B_{c_0 \frac{\rho}{2}}(y)} \nabla u\A(y)\A(y)^T : \nabla \varphi}
 \\
 \le& C \rho \bigg(\frac{1}{\rho^3} \int_{B_\rho(y)} \abs{u-\ui_{y,\rho}}^2 + \eta^2 \rho^{2\alpha} \bigg)^{\frac{1}{2}} + C \bigg(\frac{1}{\rho^3} \int_{B_\rho(y)} \abs{u-\ui_{y,\rho}}^2 + \eta^2 \rho^{2\alpha} \bigg)
\end{align*}
for all $\varphi \in C_0^\infty(B_{c_0 \frac{\rho}{2}}(y), \setR^3)$ with $\norm{\nabla \varphi}_{L^\infty} \le 1/(c_0 \rho/2)$, where $C$ is a constant depending only on $\alpha$, $\Lambda$, and the typical parameters. Let now $\epsilon >0$ be given (will be chosen later), and let
  $C_\epsilon>0$ be the corresponding constant from Lemma \ref{lem:A-harmonic approximation} with respect to the bilinear mapping $\mathcal{A}_y:\setR^{3\times3} \times \setR^{3\times3} \to \setR$ defined by
 $\mathcal{A}_y(B_1,B_2) = B_1 \A(y)\A(y)^T : B_2$
for $B_1,B_2 \in \setR^{3\times3}$. Thanks to (A1), the constant $C_\epsilon$ can be chosen independently of $y\in \Omega$. We find a $\mathcal{A}_y$-harmonic function
$w \in H^1\big(B_{c_0\frac{\rho}{2}}(y), \setR^3\big)$ such that
  \begin{align}
   \label{eq:Hoelder 1}
    \int_{B_{c_0 \frac{\rho}{2}}(y)} \abs{\nabla w}^2 \le \int_{B_{c_0 \frac{\rho}{2}}(y)} \abs{\nabla u}^2
  \end{align}
  and
  \begin{align*} \hspace{-0.1cm}
     \Big(c_0 \frac{\rho}{2} \Big)^{-3} \hspace{-0.1cm}\int_{B_{c_0 \frac{\rho}{2}}(y)} \hspace{-0.1cm}\abs{u - w}^2
     \le & C_\epsilon \, \rho^2 \bigg( \frac{1}{\rho^3} \hspace{-0.1cm}\int_{B_\rho (y)} \hspace{-0.1cm}\abs{u - \ui_{y,\rho}}^2 + \eta^2  \rho^{2 \alpha} \bigg)
     + C_\epsilon \, \bigg( \frac{1}{\rho^3}\hspace{-0.1cm} \int_{B_\rho (y)} \hspace{-0.1cm}\abs{u - \ui_{y,\rho}}^2 + \eta^2  \rho^{2 \alpha} \bigg)^{\hspace{-0.1cm}2}
     \\
     &+ \epsilon^2 \, \Big(c_0 \frac{\rho}{2} \Big)^{-1} \hspace{-0.1cm}\int_{B_{c_0 \frac{\rho}{2}}(y)}\hspace{-0.1cm} \abs{\nabla u}^{2} \,.
    \end{align*}
     With the help of the reverse \Poincare{} inequality, we obtain the estimate
    \begin{align*}
     \Big(c_0 \frac{\rho}{2} \Big)^{-3} \hspace{-0.1cm}\int_{B_{c_0 \frac{\rho}{2}}(y)} \hspace{-0.1cm}\abs{u - w}^2
     \le & C_\epsilon \, \rho^2 \bigg( \frac{1}{\rho^3} \hspace{-0.1cm}\int_{B_\rho (y)} \hspace{-0.1cm}\abs{u - \ui_{y,\rho}}^2 + \eta^2  \rho^{2 \alpha} \bigg)
     + C_\epsilon \, \bigg( \frac{1}{\rho^3}\hspace{-0.1cm} \int_{B_\rho (y)}\hspace{-0.1cm} \abs{u - \ui_{y,\rho}}^2 + \eta^2  \rho^{2 \alpha} \bigg)^{\hspace{-0.1cm}2}
     \\
     & + C  \epsilon^2 \bigg( \frac{1}{\rho^3} \hspace{-0.1cm}\int_{B_\rho(y)} \hspace{-0.1cm}\abs{u- \ui_{y,\rho}}^2 + \eta^2  \rho^{2 \alpha} \bigg) \,.
   \end{align*}
   For $\theta \in (0,1)$ (will be chosen later) we have together with \eqref{eq:mean value} that
  \begin{align*}
    \Big(\theta  c_0 \frac{\rho}{2} \Big)^{-3} \int_{B_{\theta  c_0 \frac{\rho}{2}}(y)} \abs{u- \ui_{y,\theta c_0 \frac{\rho}{2}}}^2 
    \le & 2 \Big(\theta  c_0 \frac{\rho}{2} \Big)^{-3} \int_{B_{\theta  c_0 \frac{\rho}{2}}(y)} \abs{u - w}^2 +
    2  \Big(\theta  c_0 \frac{\rho}{2} \Big)^{-3} \int_{B_{\theta  c_0 \frac{\rho}{2}}(y)} \abs{w - \overline{w}_{y,\theta c_0 \frac{\rho}{2}}}^2 \,.
  \end{align*}
  The Campanato estimate (see Lemma \ref{lem:Campanato estimates}) combined with the \Poincare{} inequality, \eqref{eq:Hoelder 1}, and the reverse \Poincare{} inequality implies
  \begin{align*}
    \Big(\theta  c_0 \frac{\rho}{2} \Big)^{-3} \int_{B_{\theta  c_0 \frac{\rho}{2}}(y)} \abs{w - \overline{w}_{y,\theta c_0 \frac{\rho}{2}}}^2  \le C  \theta^2 \bigg( \frac{1}{\rho^3} \int_{B_\rho (y)} \abs{u - \ui_{y,\rho}}^2 + \eta^2  \rho^{2 \alpha} \bigg) \,.
  \end{align*}
  A combination of the above estimates shows that
  \begin{align*}
    \Big(\theta  c_0 \frac{\rho}{2} \Big)^{-3}\hspace{-0.1cm} \int_{B_{\theta c_0 \frac{\rho}{2}}(y)} \hspace{-0.1cm}\abs{u- \ui_{y,\theta c_0 \frac{\rho}{2}}}^2 
     \le  \big( C_\epsilon\theta^{-3}\rho^2 + C_\epsilon \theta^{-3} I^2 + C  \theta^{-3} \epsilon^2 
     + C \theta^2 \big)I^2 \, ,
  \end{align*}
  where $I^2 = \max \bigset{\frac{1}{\rho^3} \int_{B_\rho(y)} \abs{u- \ui_{y,\rho}}^2 \, , \, \eta^2 \rho^{2 \alpha}}$.
  We introduce $ \kappa = \kappa(\theta)=  c_0 \theta /2 
  \in (0,c_0/2)$ and obtain
  \begin{align*}
    (\kappa \rho )^{-3} \int_{B_{\kappa \rho}(y)} \abs{u- \ui_{y,\kappa\rho}}^2
    \le & \big( C_\epsilon  \kappa^{-3}  \rho^2 + C_\epsilon \kappa^{-3} I^2 + C  \kappa^{-3}  \epsilon^2 
    + C  \kappa^2 \big) \, I^2 \, .
  \end{align*}
  We now choose $\kappa$ and $\epsilon$:
  We first choose $\kappa \in (0, c_0/2)$ such that $C \kappa^2 \le \kappa^{2 \alpha}/4$ and then choose $\epsilon>0$ such that $C \kappa^{-3} \epsilon^2 \le \kappa^{2 \alpha}/4$. This in particular implies that the constant $C_\epsilon$ is fixed. 
  We introduce the abbreviations $R_1 = \min \bigset{ \big(\kappa^{3 + 2 \alpha}/(4 C_\epsilon)\big)^{\frac{1}{2}} \, , 
   \, \big(\kappa^{3 + 2 \alpha}/(4 C_\epsilon) \big)^{\frac{1}{2 \alpha}}}$ and $\delta_1^2 = \kappa^{3 + 2 \alpha}/(4 C_\epsilon)$.
  If we assume that the radius $\rho$ is such that $\rho \le R_1$ and $\frac{1}{\rho^3} \int_{B_\rho(y)} \abs{u - \ui_{y,\rho}}^2 \le \delta_1^2$, then (w.l.o.g. $\eta\le1$)
  \begin{align*}
    (\kappa \rho)^{-3} \int_{B_{\kappa \rho}(y)} \abs{u - \ui_{y,\kappa\rho}}^2 \le \kappa^{2 \alpha} \max \biggset{ \frac{1}{\rho^3}
    \int_{B_\rho(y)} \abs{u - \ui_{y,\rho}}^2 \, , \, \eta^2\rho^{2 \alpha}} \, .
  \end{align*}
  We start an iteration process: Obviously, we have $\kappa  \rho \le R_1$ and
    $(\kappa \rho)^{-3} \int_{B_{\kappa \rho}(y)} \abs{u - \ui_{y,\kappa \rho}}^2 \le \delta_1^2$. 
  This means that the same assumptions are also satisfied for $y$ and $\kappa \rho$. 
  An induction argument shows that
  \begin{align*}
    \big(\kappa^k \rho\big)^{-3} \int_{B_{\kappa^k \rho}(y)} \abs{u - \ui_{y,\kappa^k \rho}}^2 \le \kappa^{2 k  \alpha}
    \max \biggset{ \frac{1}{\rho^3} \int_{B_\rho(y)} \abs{u - \ui_{y,\rho}}^2 \, , \, \eta^2 \rho^{2 \alpha}}
  \end{align*}
  for all $k \in \setN_0$, $y \in B_r(x_0)$, and $0<\rho \le c_0  r$, provided
    $\rho \le R_1$ and $\frac{1}{\rho^3} \int_{B_\rho(y)} \abs{u - \ui_{y,\rho}}^2 \le \delta_1^2$.
  We now define $R_0 = R_1/c_0$ and $\delta_0^2 = c_0^3 \delta_1^2$ and assume that
  \begin{align*}
    r \le R_0 \qquad \text{and} \qquad \frac{1}{r^3} \int_{B_{2r}(x_0)} \abs{u - \ui_{x_0,2r}}^2 \le \delta_0^2 \,.
  \end{align*}
  In particular, the above estimate is true for every choice of $y \in B_r(x_0)$ and $\rho = c_0 r$. For a given $\sigma \in (0,c_0 r]$ there is a 
  $k \in \setN_0$ such that $\kappa^{k+1} c_0 r \le \sigma \le \kappa^k c_0 r$,
  hence
  \begin{align*}
    \frac{1}{\sigma^3} \int_{B_\sigma(y)} \abs{u- \ui_{y,\sigma}}^2 \le C\Big( \frac{\sigma}{c_0 r}\Big)^{2 \alpha} 
    \max \biggset{ \frac{1}{r^3} \int_{B_{2r}(x_0)} \abs{u - \ui_{x_0,2r}}^2 \, , \, \eta^2 r^{2 \alpha}} \,.
  \end{align*}
  An application of the Campanato lemma (see Lemma \ref{lem:Campanato}) yields the desired result.
\end{proof}

\section{Higher regularity}
\label{sec:Higher regularity}
With the help of Lemma \ref{thm:hoeldercontinuity}, we can now prove the small-energy-regularity theorem:
\begin{theorem}
  \label{thm:linftygradient}
  Suppose $u$ and $\A$ satisfy assumptions (A1) and (A2). For every $\Lambda>0$ there are positive constants $\delta_0$, $R_0$, and $C$ depending only on $\Lambda$ and the typical parameters with the following property:
  If $B_{3r}(x_0) \subset \Omega$ is a ball with $0<r\le R_0$ such that
  \begin{align*}
  \frac{1}{r^3} \int_{B_{2r}(x_0)} \abs{u - \ui_{x_0,2r}}^2 \le \delta_0^2 \qquad \text{and} \qquad
  \frac{1}{r} \int_{B_{3r}(x_0)} \abs{\nabla u \A}^2 \le \Lambda \, ,
  \end{align*}
  then $u \in H^2(B_{\theta r}(x_0), \setR^3) \cap C^{1,\gamma}(\overline{B_{\theta r}(x_0)}, \setR^3)$ for every $\gamma \in (0,1)$, where the constant $\theta \in (0,1)$ depends only on $c_0$. Moreover, we have the estimate
  \begin{align*}
    r \hspace{-0.1cm} \sup_{B_{\theta r}(x_0)} \abs{\nabla u} \le C \max \biggset{ \frac{1}{r^3} \int_{B_{2r}(x_0)} \abs{u - \ui_{x_0,2r}}^2 \, , \, 
    \eta^2  r}^{\frac{1}{2}} \,.
  \end{align*}
\end{theorem}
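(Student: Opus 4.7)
My plan is to bootstrap from the Hölder regularity of Lemma \ref{thm:hoeldercontinuity} to full $C^{1,\gamma}$ and $H^2$ regularity, keeping track of constants to obtain the quantitative $L^\infty$-bound on $\nabla u$. The architecture has three stages: (i) promote the decay of $u$ into a Morrey estimate on $\nabla u$ via the reverse Poincaré inequality; (ii) apply the harmonic approximation technique a second time, but organized so that the decay is transferred to $\nabla u$, which via Campanato's lemma yields $\nabla u \in C^{0,\gamma}$; (iii) deduce $H^2$-regularity and the explicit $\sup|\nabla u|$ estimate from the bounded-gradient RHS in the Euler--Lagrange equation.

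Concretely, I first apply Lemma \ref{thm:hoeldercontinuity} with $\alpha=1/2$ to obtain for $y\in B_r(x_0)$ and $\sigma\in (0,c_0 r]$ the decay
\begin{align*}
\sigma^{-3}\int_{B_\sigma(y)}\abs{u-\ui_{y,\sigma}}^2 \le C\Big(\frac{\sigma}{c_0 r}\Big)^{\!2\alpha} I^2, \qquad I^2 := \max\!\Bigset{r^{-3}\!\!\int_{B_{2r}(x_0)}\!\abs{u-\ui_{x_0,2r}}^2\!,\,\eta^2 r}.
\end{align*}
Combined with Corollary \ref{cor:poincare} this yields the Morrey control $\sigma^{-1}\int_{B_{c_0\sigma/2}(y)}\abs{\nabla u}^2 \le C(\sigma/r)I^2 + C\eta^2 \sigma$. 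Next, I rewrite the equation (A2) in divergence form $-\partial_j\bigl((\A\A^T)_{ij}\partial_i u^k\bigr) = \abs{\nabla u \A}^2 u^k + \eta^2 f^k$ with Lipschitz coefficient $\A\A^T$, and apply the $\mathcal{A}$-harmonic approximation (Lemma \ref{lem:A-harmonic approximation}) a second time for the bilinear form $\mathcal{A}_y(B_1,B_2)= B_1\A(y)\A(y)^T\!:\!B_2$ frozen at $y$. The freezing error is controlled by the Lipschitz continuity of $\A\A^T$, the nonlinearity $\abs{\nabla u\A}^2 u\cdot \varphi$ by the Morrey bound just derived together with the Hölder continuity of $u$ (writing $u=u(y)+(u-u(y))$), and the $\eta^2 f$-term by Hölder's inequality. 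This produces an $\mathcal{A}_y$-harmonic comparison map $w$ with $\int_{B_\sigma(y)}\abs{\nabla w}^2\le\int_{B_\sigma(y)}\abs{\nabla u}^2$ and a quantitative $L^2$-closeness $u\approx w$; a Caccioppoli argument for $u-w$ then upgrades this to $\int_{B_{\sigma/2}(y)}\abs{\nabla u-\nabla w}^2 \le (\text{small})\int_{B_\sigma(y)}\abs{\nabla u}^2 + \text{error}(\sigma,\eta)$. Plugging in the classical Campanato decay of $\nabla w$ from Lemma \ref{lem:Campanato estimates} yields an iterative estimate of the form
\begin{align*}
\int_{B_{\kappa\sigma}(y)}\abs{\nabla u-\overline{\nabla u}_{y,\kappa\sigma}}^2 \le C\kappa^{3+2\gamma}\int_{B_\sigma(y)}\abs{\nabla u-\overline{\nabla u}_{y,\sigma}}^2 + C\eta^2 \sigma^{3+2\gamma}
\end{align*}
for any $\gamma\in(0,1)$ and suitable $\kappa\in(0,1)$. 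Iterating and applying Lemma \ref{lem:Campanato} to $\nabla u$ yields $\nabla u\in C^{0,\gamma}(\overline{B_{\theta r}(x_0)})$, hence $u\in C^{1,\gamma}$, for every $\gamma\in (0,1)$.

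For the quantitative bound, I track constants through the Campanato iteration: the starting Morrey estimate $\sigma^{-1}\int_{B_{c_0 \sigma /2}(y)}\abs{\nabla u}^2\lesssim (\sigma/r)I^2+\eta^2\sigma$ propagates through the scales with the same $I^2$-factor, and Campanato's embedding yields $r^2\sup_{B_{\theta r}(x_0)}\abs{\nabla u}^2 \le C I^2$. For $H^2$-regularity, with $\nabla u\in L^\infty(B_{\theta r}(x_0))$, the RHS $g= \abs{\nabla u\A}^2 u +\eta^2 f$ is in $L^2_{\mathrm{loc}}$, and a standard difference-quotient argument testing the equation against $-\partial_l(\zeta^2\partial_l u)$ for a cutoff $\zeta$, combined with the Lipschitz regularity of $\A\A^T$, gives $u\in H^2(B_{\theta r}(x_0))$.

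The main obstacle I expect is the second harmonic-approximation step. Unlike the proof of Lemma \ref{thm:hoeldercontinuity}, where the harmonic approximation is used only at the level of $u$ itself, here I need the decay to pass to $\nabla u$. This requires both a Caccioppoli-type comparison between $\nabla u$ and $\nabla w$ (where $w$ is the approximating $\mathcal{A}_y$-harmonic map) and a careful treatment of the quadratic gradient term $\abs{\nabla u\A}^2 u$: the key is that the Morrey estimate from Stage 1 makes this term small (of order $\sigma^{2\alpha}$ after integration) on small balls, so that it can be absorbed as an admissible error when bounding the harmonic-approximation defect. Choosing the iteration parameters $\kappa$, $\epsilon$, and the initial scale $R_0$ so that the iteration closes for every target exponent $\gamma\in(0,1)$ is the most delicate bookkeeping step.
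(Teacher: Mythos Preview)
Your three-stage architecture matches the paper's: first invoke Lemma~\ref{thm:hoeldercontinuity} to get H\"older continuity of $u$, then compare $u$ with a frozen-coefficient harmonic function to obtain Campanato decay for $\nabla u$, and finally read off the $L^\infty$-bound and upgrade via elliptic regularity. The difference---and the gap---is in how you build the comparison function $w$.

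The paper does \emph{not} reuse the abstract $\mathcal{A}$-harmonic approximation lemma for this step. Instead it solves the Dirichlet problem $\operatorname{div}(\nabla w\,\A(y)\A(y)^T)=0$ in $B_\rho(y)$ with $w=u$ on $\partial B_\rho(y)$. This buys two things simultaneously: $u-w\in H^1_0(B_\rho(y))$ is an admissible test function without any cutoff, and the maximum principle together with the already-established H\"older bound on $u$ gives the pointwise estimate $\sup_{B_\rho(y)}|u-w|\le C I_0(\rho/r)^\alpha$. The critical quadratic term then falls out immediately:
\[
\int_{B_\rho(y)}|\nabla u\A|^2\,u\cdot(u-w)\;\le\;\sup_{B_\rho(y)}|u-w|\int_{B_\rho(y)}|\nabla u|^2.
\]
Your route via Lemma~\ref{lem:A-harmonic approximation} yields only $L^2$-closeness of $u$ and $w$, with no boundary agreement and no $L^\infty$-control on $u-w$. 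In your Caccioppoli step with $\varphi=\zeta^2(u-w)$ the same quadratic term becomes $\int\zeta^2|\nabla u|^2|u-w|$, and with only $\|u-w\|_{L^2}$ available you would need $\nabla u\in L^4$, which you do not have at this stage. The decomposition $u=u(y)+(u-u(y))$ helps bound the harmonic-approximation \emph{defect}, but it does not help inside the Caccioppoli inequality because the factor is $u-w$, not $u-u(y)$. This is the missing idea; replacing the abstract approximation by the Dirichlet comparison fixes it cleanly.

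A secondary point: the paper's iteration produces only one fixed exponent ($\gamma=6/71$, starting from $\alpha=11/12$), not every $\gamma\in(0,1)$. The full range $C^{1,\gamma}$ for all $\gamma<1$ is obtained afterwards by noting that, once $\nabla u\in L^\infty$, the right-hand side $|\nabla u\A|^2u+\eta^2 f$ lies in every $L^p$, and then invoking interior $L^2$-theory followed by $L^p$-estimates for elliptic systems in non-divergence form plus Sobolev embedding. Your claim that the iteration itself closes for every $\gamma$ is optimistic and not needed.
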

\begin{proof}
 Let $\Lambda>0$ be given and define $\alpha = 11 / 12$. We find positive constants $\delta_0$, $R_0$, and $C$ as in Lemma \ref{thm:hoeldercontinuity} with respect to $\Lambda$ and $\alpha$. Let now $B_{3r}(x_0) \subset \Omega$ be a ball with radius $0<r\le R_0$ such that $\frac{1}{r^3} \int_{B_{2r}(x_0)} \abs{u - \ui_{x_0,2r}}^2 \le \delta_0^2$ and $\frac{1}{r} \int_{B_{3r}(x_0)} \abs{\nabla u \A}^2 \le \Lambda$. This in particular implies that $u \in C^{0,\alpha}(B_{c_0 r}(x_0) , \setR^3)$ and
 \begin{align}
  \label{eq:differentiability 1}
  \abs{u(x) - u(y)} \le C  I_0 \, \bigg( \frac{\abs{x-y}}{c_0 r} \bigg)^{\alpha}
 \end{align}
  for every $x,y \in B_{c_0 r}(x_0)$, where
    $I_0 = \max \set{ \frac{1}{r^3} \int_{B_{2r}(x_0)} \abs{u - \ui_{x_0,2r}}^2 \, , \, \eta^2  r^{2 \alpha}}^{\frac{1}{2}}$.
  In the sequel we show that $\nabla u$ belongs to $C^{0,\gamma}$ for $\gamma = 6 / 71$ on a ball centered at $x_0$. Let therefore $y \in B_{c_0 \frac{r}{2}} (x_0) $ and $0 < \rho \le 
  c_0^2 r/4$ be given and consider the unique solution $w \in H^1(B_\rho(y) , \setR^3)$ of
  \begin{align*}
    \Div \big(\nabla w \A(y)\A(y)^T\big) &= 0 \qquad \text{in } B_\rho(y)
    \\
    w &= u \qquad \text{on } \partial B_\rho(y) \,.
  \end{align*}
  An application of \cite[III, Proposition 2.3]{giaquinta} implies the 
  estimate
  \begin{align}
   \label{eq:differentiability 2}
    \sup_{B_\rho (y)} \abs{w} \le C  \hspace{-0.1cm}\sup_{\partial B_\rho (y)} \abs{u} = C\, ,
  \end{align}
  where, thanks to (A1), the constant $C$ is independent of the special 
 choice of $y \in \Omega$. In particular, we have $w \in H^1\big(B_\rho (y), \setR^3\big) \cap L^\infty \big(B_\rho (y), \setR^3\big)$, and we obtain with the help of assumption (A2) that
  \begin{align*}
    & \int_{B_\rho(y)} \hspace{-0.1cm}(\nabla u -\nabla w ) \A(y)\A(y)^T : \nabla \varphi
    \\
    =& \int_{B_\rho(y)} \hspace{-0.1cm}\nabla u \big( \A(y)\A(y)^T - \A\A^T \big) : \nabla \varphi + \int_{B_\rho(y)} \hspace{-0.1cm}\abs{\nabla u \A}^2 u \cdot \varphi + \eta^2 \int_{B_\rho(y)} \hspace{-0.1cm}f \cdot \varphi
  \end{align*}
  for every test function $ \varphi \in C_0^\infty(B_\rho (y), \setR^3)$. Moreover, it is easily seen that
    $\varphi = u - w$ belongs to the function space $H^1_0(B_\rho (y), \setR^3) \cap
  L^ \infty(B_\rho (y), \setR^3)$ and therefore
 is an admissible test function.
 This implies together with (A1) and the Young inequality that
 \begin{align*}
    \frac{1}{\beta} \int_{B_\rho (y)} \abs{\nabla u- \nabla w}^2
   \le & C \rho^2 \int_{B_\rho(y)} \abs{\nabla u}^2 + \frac{1}{2\beta} \int_{B_\rho(y)} \abs{\nabla u - \nabla w}^2
   + N^2 \sup_{B_\rho(y)} \abs{u -w} \int_{B_\rho(y)} \abs{\nabla u}^2 
   \\
   &+  \eta^2 \sup_{B_\rho(y)} \abs{u-w} 
   \int_{B_\rho(y)} \abs{f} \, .
 \end{align*}
 Now, we absorb the term $\frac{1}{2\beta} \int_{B_\rho(y)} \abs{\nabla u - \nabla w}^2$ on the left hand side and apply the H{\"o}lder inequality with $p = 3 /(2\alpha+1)$ to the last term on the right hand side in order to find the following estimate:
 \begin{align*}
  \frac{1}{2\beta} \int_{B_\rho (y)} \abs{\nabla u- \nabla w}^2
  \le& C \rho^2 \int_{B_\rho(y)} \abs{\nabla u}^2 + N^2 \sup_{B_\rho(y)} \abs{u -w} \int_{B_\rho(y)} \abs{\nabla u}^2 
   +  C \eta^2 \sup_{B_\rho(y)} \abs{u-w} \,\rho^{2\alpha+1} \, .
 \end{align*}
 For every $x \in B_\rho(y)$ and arbitrary (but fixed) $z \in \partial B_\rho (y)$, we obtain with the help of
 \eqref{eq:differentiability 1} and an estimate similar to \eqref{eq:differentiability 2} that
 \begin{align*}
   \abs{u(x) - w(x)} & \le \abs{u(x) - u(z)} + \abs{u(z) - w(x)}
   \le C  I_0  \Big( \frac{\rho}{c_0 r} \Big)^\alpha + C \hspace{-0.1cm} \sup_{\partial B_\rho (y)} \abs{u(z) - u}
   \le C I_0 \Big( \frac{\rho}{c_0 r} \Big)^\alpha \,.
 \end{align*}
 The reverse \Poincare{} inequality (see Corollary \ref{cor:poincare}) and 
the easily verified fact that $\rho^2 \le C(\rho/ c_0r)^\alpha$ imply the 
 estimate
 \begin{align*}
   \int_{B_\rho (y)} \abs{\nabla u - \nabla w}^2
 \le & C \Big(\frac{\rho}{c_0r}\Big)^\alpha \Big(\frac{2\rho}{c_0}\Big)^{-2} \int_{B_{\frac{2\rho}{c_0}}(y)} \abs{u - \ui_{y,\frac{2\rho}{c_0}}}^2
 + C \eta^2 \Big(\frac{\rho}{c_0r}\Big)^\alpha \rho^{2\alpha+1}
\\
&+ CI_0 \Big(\frac{\rho}{c_0r}\Big)^\alpha \Big(\frac{2\rho}{c_0}\Big)^{-2} \int_{B_{\frac{2\rho}{c_0}}(y)} \abs{u - \ui_{y,\frac{2\rho}{c_0}}}^2 + C \eta^2 I_0 \Big(\frac{\rho}{c_0r}\Big)^\alpha \rho^{2\alpha+1}\,.
 \end{align*}
With the help of the Jensen inequality and \eqref{eq:differentiability 1}, we obtain that
 \begin{align}
  \label{eq:differentiability 4}
  \begin{split}
   \int_{B_{\frac{2 \rho}{c_0}} (y)} \abs{u - \ui_{y,\frac{2 \rho}{c_0}}}^2
    \le \int_{B_{\frac{2 \rho}{c_0}} (y)} 
   \dashint\nolimits_{\hspace{-0.16cm}B_{\frac{2 \rho}{c_0}} (y)} \abs{u(x) - u(z)}^2 \, dz\, dx 
    \le C  I_0^2  \rho^3 \Big( \frac{\rho}{c_0 r} \Big)^{2 \alpha} \, .
  \end{split}
 \end{align}
 We can assume $I_0 \le 1$, hence
 \begin{align*}
   \frac{1}{\rho^3} \int_{B_\rho(y)} \abs{\nabla u - \nabla w}^2
   \le & C  I_0^2 (c_0 r)^{-2} \Big( \frac{ \rho}{c_0r} \Big)^{3 \alpha -2} + C \eta^2 (c_0 r)^{2 \alpha -2} 
   \Big( \frac{ \rho}{c_0r} \Big)^{3 \alpha -2} \, .
 \end{align*}
 This together with $\eta^2 (c_0 r )^{2 \alpha} \le c_0^{2 \alpha} I_0^2$ implies that
 \begin{align}
 \label{eq:differentiability 3}
   &\frac{1}{\rho^3} \int_{B_\rho(y)} \abs{\nabla u - \nabla w}^2
   \le CI_0^2 (c_0 r)^{-2} \Big( \frac{ \rho}{c_0r} \Big)^{3 \alpha -2} \,.
 \end{align}
 For $0<\sigma < \rho$ we find with the help of \eqref{eq:mean value} and the Campanato estimate (see Lemma \ref{lem:Campanato estimates}) that
 \begin{align*}
    \frac{1}{\sigma^3} \int_{B_\sigma(y)} \abs{\nabla u - {\overline{\nabla u}}_{y,\sigma}}^2  
   \le & 2 \Big( \frac{\rho}{\sigma}\Big)^3 \frac{1}{\rho^3} \int_{B_\rho(y)}\abs{\nabla u - \nabla w}^2
   + C \Big( \frac{\sigma}{\rho}\Big)^2 \frac{1}{\rho^3} \int_{B_\rho(y)} \abs{\nabla w - {\overline{\nabla w}}_{y,\rho}}^2
   \\
   \le & C I_0^2 (c_0 r)^{-2} \Big( \frac{\rho}{c_0 r} \Big)^{3 \alpha -2} \Big( \frac{\rho}{\sigma} \Big)^3 
   + C \Big( \frac{\sigma}{\rho}\Big)^2 \frac{1}{\rho^3} \int_{B_\rho(y)} \abs{\nabla w - {{\overline{\nabla w}}_{y,\rho}}}^2 \, .
 \end{align*}
 We estimate the remaining integral on the right hand side with the help of \eqref{eq:mean value} (choose $\lambda=0$),  \eqref{eq:differentiability 3}, the reverse \Poincare{} inequality (see Corollary \ref{cor:poincare}), \eqref{eq:differentiability 4}, and the fact that $\eta^2 (c_0 r )^{2 \alpha} \le c_0^{2 \alpha} I_0^2$ as follows:
 \begin{align*}
    \frac{1}{\rho^3} \int_{B_\rho (y)} \abs{\nabla w - {\overline{\nabla w}}_{y,\rho}  }^2
   \le & \frac{2}{\rho^3} \int_{B_\rho (y)} \abs{\nabla w - \nabla u}^2 + \frac{2}{\rho^3} \int_{B_\rho (y)} \abs{\nabla u}^2
   \\
   \le & C  I_0^2 (c_0 r)^{-2} \Big( \frac{\rho}{c_0r} \Big)^{3 \alpha -2} \hspace{-0.1cm}
   + C  I_0^2 (c_0 r)^{-2} \Big(\frac{\rho}{c_0 r} \Big)^{2 \alpha -2} \hspace{-0.1cm}
   + C  \eta^2 (c_0 r)^{2 \alpha - 2}\Big(\frac{\rho}{c_0 r} \Big)^{2 \alpha -2}
   \\
   \le & C  I_0^2 (c_0 r)^{-2} \Big(\frac{\rho}{c_0 r} \Big)^{2 \alpha -2} \, .
 \end{align*}
 We conclude 
 \begin{align*}
   \frac{1}{\sigma^3} \hspace{-0.1cm}\int_{B_\sigma(y)} \hspace{-0.1cm}\abs{\nabla u - {\overline{\nabla u}}_{y,\sigma}}^2 \le C I_0^2 (c_0r)^{-2} \bigg( \hspace{-0.1cm}\Big(\frac{\rho}{c_0r}\Big)^
   {3 \alpha -2} \Big(\frac{\rho}{\sigma} \Big)^3 \hspace{-0.1cm}+  \Big(\frac{\rho}{c_0r}\Big)^{2 \alpha -2} \Big( \frac{\sigma}{\rho} \Big)^2 \bigg)
 \end{align*}
 for all $y \in B_{c_0\frac{r}{2}}(x_0)$ and $ 0 < \sigma < \rho \le c_0^2 r / 4$. We now define $ \kappa =1 + \alpha / 5$ and
   $\rho(\sigma) = (\sigma /{c_0 r})^{\frac{1}{\kappa}} c_0 r$ for $0<\sigma \le c_0^2 
   (c_0/4)^{\kappa -1} r / 4$.
 This in particular implies that $ 0<\sigma < \rho(\sigma) \le c_0^2 r/4$, and we obtain by the above estimate
 \begin{align*}
   \frac{1}{\sigma^3} \int_{B_\sigma(y)} \abs{\nabla u - {\overline{\nabla u}}_{y,\sigma}}^2 \le C  I_0^2 (c_0r)^{-2} \Big(\frac{\sigma}{c_0r} \Big)^{2\gamma}
 \end{align*}
 for all $y \in B_{c_0 \frac{r}{2}}(x_0)$ and $0<\sigma \le c_0^2 
 (c_0/4)^{\kappa - 1} r/4$,
 where $\gamma = 6/71$. Therefore, the Campanato lemma (see Lemma \ref{lem:Campanato}) implies that $\nabla u \in C^{0,\gamma} 
 ( B_{3\theta r}(x_0), \setR^{3\times3})$ with $3\theta = c_0^2(c_0/4)^{\kappa - 1}/4$. Moreover, we have the estimate
 \begin{align*}
   \abs{\nabla u(x) - \nabla u(y)} \le C I_0 r^{-1} \bigg( \frac{\abs{x-y}}{r} \bigg)^\gamma
 \end{align*}
 for all $x,y \in B_{3\theta r}(x_0)$. For a given $x \in B_{3\theta r}(x_0)$, we can now estimate the derivative $\nabla u$ of $u$ as follows:
 \begin{align*}
   \abs{\nabla u(x)} 
    \le \dashint\nolimits_{\hspace{-0.16cm}B_{3\theta r}(x_0)} \abs{\nabla u(x) - \nabla u(y)} \, dy + C  (\theta r)^{-3} \int_{B_{3\theta r}(x_0)} \abs{\nabla u} 
    \le C I_0 r^{-1} + C r^{-1} \bigg( \frac{1}{r} \int_{B_{3\theta r}(x_0)} \abs{\nabla u}^2 \bigg)^{\frac{1}{2}} .
 \end{align*}
 We obtain with the help of the reverse \Poincare{} inequality (see Corollary \ref{cor:poincare}) that
 \begin{align*}
   \abs{\nabla u(x)} & \le C I_0 r^{-1} + C r^{-1} \bigg( \frac{1}{r^3} \int_{B_{2 r}(x_0)} \abs{u- \ui_{x_0,2 r}}^2 + \eta^2 r^{2 \alpha} \bigg)^{\frac{1}{2}}
   \le C I_0 r^{-1} \,,
 \end{align*}\
 hence ($2\alpha>1$ and $R_0 \le 1$)
 \begin{align*}
   r \hspace{-0.1cm}\sup_{B_{3\theta r}(x_0)} \abs{\nabla u} \le C \max \biggset{ \frac{1}{r^3} \int_{B_{2r}(x_0)} \abs{u -\ui_{x_0,2r}}^2 \, , \, \eta^2  r}^
   {\frac{1}{2}} \, .
 \end{align*}
 Moreover, $u$ is a weak solution of
 $\Div (\nabla u \A\A^T) = F$ in $B_{3\theta r}(x_0)$
 with Lipschitz continuous coefficients $\A\A^T$ and right hand side $F \in L^p$ for every $1<p<\infty$. With the help of the interior $L^2$-regularity theory for linear elliptic systems, we obtain that $u \in H^2(B_{2\theta r}(x_0), \setR^3)$ (see \cite{GiaquintaII}). 
Now, we can apply standard $L^p$-estimates for elliptic systems in non-divergence form and find $u \in W^{2,p}(B_{\theta r}(x_0), \setR^3)$ for every $1<p<\infty$. The Sobolev embedding theorem implies that $u \in C^{1,\gamma}(\overline{B_{\theta r}(x_0)}, \setR^3)$ for every $\gamma \in (0,1)$, and the theorem is proved.
\end{proof}
Finally, we apply the small-energy-regularity theorem in combination with a covering argument to obtain our main result:
\begin{proof}[Proof (of the main result).]
 Let $u_\eta$ be a minimizer of $E^\eta$ with parameter $0<\eta \le 1$. We use the constant comparison function $v = e_1$ to find that
  \begin{align}
   \label{eq:regularitysmall}
    \int_\Omega \abs{\nabla u_\eta}^2 \le E^\eta(u_\eta) \le E^\eta(e_1) = \eta^2 \int_{\setR^3} \abs{H[e_1]}^2 \le C  \eta^2 \,.
  \end{align}
The idea is to use this estimate to guarantee for sufficiently small $\eta$ the smallness assumption of Theorem \nolinebreak \ref{thm:linftygradient}. Let now $x \in \overline{\Omega}$ be an arbitrary point. If $x \in \Omega$, then $u_\eta$ satisfies assumptions (A1) and (A2) on $\Omega$ with $\A\equiv I$ and
$f = H[u_\eta] -  u_\eta \cdot H[u_\eta] \, u_\eta$
thanks to Lemmas \ref{lem:eulerlagrange} and \ref{lem:almostminimizer}. The monotonicity formula (see Lemma \ref{lem:monotonicity}) and \eqref{eq:regularitysmall} imply that
 $\frac{1}{t} \int_{B_{t}(x)} \abs{\nabla u_\eta}^2 
     \le C(x) \eta^2$
for every $0<t\le 3d(x)$, where $3d(x) =  \text{dist}(x,\partial \Omega)$. We define $\Lambda(x) = 3 C(x)$ and obtain positive $\delta_0(x)$, $R_0(x)$, and $\theta(x) \in (0,1)$ as in Theorem \ref{thm:linftygradient}. For
$r(x) = \min \set{d(x) \, , \, R_0(x)}$
we have that $r(x) \le R_0(x)$, $\frac{1}{r(x)} \int_{B_{3r(x)}(x)} \abs{\nabla u_\eta}^2 \le \Lambda(x)$,
and thanks to the \Poincare{} inequality, we obtain
\begin{align*}
  \frac{1}{r(x)^3} \int_{B_{2 r(x)}(x)} \abs{u_\eta - {\overline{u_\eta}}_{x,2r(x)}}^2 \le \frac{C}{r(x)} \int_{B_{2r(x)}(x)} \abs{\nabla u_\eta}^2 \le C_0(x) 
    \eta^2 \,.
\end{align*}
If $x$ belongs to the boundary of $\Omega$, then we have to use the reflection method as described in Section \ref{sec:reflection method}. More precisely, we flatten the boundary of $\Omega$ with the help of the $C^1$-diffeomorphism $\psi_x:U_x \to B_{R_x}$ from Lemma \ref{lem:coordinates} and define the function $\overline{u}_\eta^x \in H^1(B_{R_x},S^2)$ by reflection. Here, $U_x$ is an open neighborhood of $x$ in $\setR^3$, $\psi_x(x) = 0$, $\psi_x(\Omega\cap U_x) = B_{R_x}^+$, and $u_\eta = \overline{u}_\eta^x \circ \psi_x \quad \text{on } \Omega 
\cap U_x$.
Moreover, $\overline{u}_\eta^x$ satisfies assumptions (A1) and (A2) on $B_{R_x}$ with $\A$ defined as in Lemma \ref{lem:coordinates} and $f = \overline{H}[\overline{u}_\eta^x] - \overline{u}_\eta^x \cdot \overline{H}[\overline{u}_\eta^x] \,\overline{u}_\eta^x$
(see Lemmas \ref{lem:eulerlagrangeloc} and \ref{lem:almostminimizerloc}).
As above, the monotonicity formula (see Lemma \ref{thm:monotonicity}) combined with the transformation formula and \eqref{eq:regularitysmall} implies that
\begin{align*}
 \frac{1}{t}\int_{B_t} \abs{\nabla \overline{u}_\eta^x \A}^2 \le C(x) \eta^2
\end{align*}
for every $0<t\le 3 d(x)$, where $d(x)$ is chosen sufficiently small. For $\Lambda(x) = 3 C(x)$ we obtain positive 
$\delta_0(x)$, $R_0(x)$, and $\theta(x) \in (0,1)$ as in Theorem \ref{thm:linftygradient}. Moreover, $r(x)$ defined by $r(x) = \min\bigset{d(x), R_0(x)}$
satisfies
  $r(x) \le R_0(x)$, $\frac{1}{r(x)} \int_{B_{3 r(x)}} \abs{\nabla \overline{u}_\eta^x \A}^2 \le \Lambda(x)$,
and
\begin{align*}
 \frac{1}{r(x)^3} \int_{B_{2  r(x)}} \abs{\overline{u}_\eta^x -\overline{\hspace{0.05cm}\overline{u}_\eta^x}_{\,0,2r(x)}\hspace{0.05cm}}^2  
   \le C_0(x) \eta^2 \, ,
\end{align*}
where we have used the \Poincare{} inequality and $\sup_{y \in B_{R_x}} \abs{\A(y)^{-1}} < \infty$. Furthermore, we know that
\begin{align*}
 \overline{\Omega} \subset \bigcup_{x \in \Omega} B_{\theta(x)r(x)}(x) \cup  \bigcup_{x \in \partial \Omega} \psi_x^{-1}
    \big( B_{\theta(x) r(x)} \big) \, .
\end{align*}
Due to the compactness of $\overline{\Omega}$, there exist finitely many points 
$x_1,\dots,x_n \in \Omega$ and $x_{n+1},\dots,x_{n+m} \in \partial \Omega$
such that
\begin{align*}
  \overline{\Omega} \subset \bigcup_{i=1}^n B_{\theta(x_i)r(x_i)}(x_i) \cup  \bigcup_{j=1}^m \psi_{x_{n+j}}^{-1}
    \big( B_{\theta(x_{n+j}) r(x_{n+j})} \big) \, .
\end{align*}
We define $C_1(\Omega)  = \max \set{ C_0(x_1) , \dots ,C_0(x_{n+m})}$, $\delta_0(\Omega)  = \min \set{\delta_0(x_1) , \dots ,\delta_0(x_{n+m})}$,
and 
$\eta^2_0(\Omega) = \delta_0(\Omega)^2/C_1(\Omega)$.
An application of Theorem \ref{thm:linftygradient} now shows that every minimizer $u_\eta$ of $E^\eta$ with parameter $0<\eta\le\eta_0$ belongs to $H^2(\Omega,\setR^3) \cap C^{1,\gamma}(\overline{\Omega},\setR^3)$ for every $\gamma \in (0,1)$ and $\norm{\nabla u_\eta}_{L^\infty} \le C_0(\Omega) \eta$.
Moreover, Lemma \ref{lem:eulerlagrange} implies that 
\begin{align*}
 \int_\Omega \nabla u_\eta : \nabla \varphi = \int_\Omega g \cdot \varphi
\end{align*}
for all $\varphi \in C^1_0(\setR^3,\setR^3)$, where $g \in L^2(\Omega,\setR^3)$ is a function. We conclude $\frac{\partial u_\eta}{\partial \nu} = 0$ on $\partial \Omega$. This completes the proof.
\end{proof}

\medskip

\noindent {\bf Acknowledgments.}
This work is part of the author's PhD thesis prepared at the Max Planck Institute for Mathematics in the Sciences (MPIMiS) and submitted in June 2009 at the University of Leipzig, Germany. The author would like to thank his supervisor Stefan M{\"u}ller for the opportunity to work at MPIMiS and for having chosen an interesting problem to work on. Financial support from the International Max 
Planck Research School `Mathematics in the Sciences' (IMPRS) is also acknowledged.

\bibliographystyle{abbrv}
\bibliography{article-regularity}

\bigskip\small

\noindent{\sc NWF I-Mathematik, Universit\"at Regensburg,  93040 Regensburg}\\
{\it E-mail address}: {\tt alexander2.huber@mathematik.uni-regensburg.de}

\end{document}